\newskip\Einheit \Einheit=0.6cm
\newdimen\xdim \newdimen\ydim \newdimen\PfadD@cke \newdimen\Pfadd@cke
\def\PfadDicke#1{\PfadD@cke#1 \divide\PfadD@cke by2 \Pfadd@cke\PfadD@cke \multiply\PfadD@cke by2}
\long\def\LOOP#1\REPEAT{\def\BODY{#1}\ITERATE}
\def\ITERATE{\BODY \let\next\ITERATE \else\let\next\relax\fi \next}
\let\REPEAT=\fi
\def\Punkt{\hbox{\raise-2pt\hbox to0pt{\hss\scriptsize$\bullet$\hss}}}
\def\DuennPunkt(#1,#2){\unskip
  \raise#2 \Einheit\hbox to0pt{\hskip#1 \Einheit
          \raise-2.5pt\hbox to0pt{\hss\normalsize$\bullet$\hss}\hss}}
\def\NormalPunkt(#1,#2){\unskip
  \raise#2 \Einheit\hbox to0pt{\hskip#1 \Einheit
          \raise-3pt\hbox to0pt{\hss\large$\bullet$\hss}\hss}}
\def\DickPunkt(#1,#2){\unskip
  \raise#2 \Einheit\hbox to0pt{\hskip#1 \Einheit
          \raise-4pt\hbox to0pt{\hss\Large$\bullet$\hss}\hss}}
\def\Kreis(#1,#2){\unskip
  \raise#2 \Einheit\hbox to0pt{\hskip#1 \Einheit
          \raise-4pt\hbox to0pt{\hss\Large$\circ$\hss}\hss}}
\def\Diagonale(#1,#2)#3{\unskip\leavevmode
  \xcoord#1\relax \ycoord#2\relax
      \raise\ycoord \Einheit\hbox to0pt{\hskip\xcoord \Einheit
         \unitlength\Einheit
         \line(1,1){#3}\hss}}
\def\AntiDiagonale(#1,#2)#3{\unskip\leavevmode
  \xcoord#1\relax \ycoord#2\relax 
      \raise\ycoord \Einheit\hbox to0pt{\hskip\xcoord \Einheit
         \unitlength\Einheit
         \line(1,-1){#3}\hss}}
\def\Pfad(#1,#2),#3\endPfad{\unskip\leavevmode
  \xcoord#1 \ycoord#2 \thicklines\ZeichnePfad#3\endPfad\thinlines}
\def\ZeichnePfad#1{\ifx#1\endPfad\let\next\relax
  \else\let\next\ZeichnePfad
    \ifnum#1=1
      \raise\ycoord \Einheit\hbox to0pt{\hskip\xcoord \Einheit
         \vrule height\Pfadd@cke width1 \Einheit depth\Pfadd@cke\hss}%
      \advance\xcoord by 1
    \else\ifnum#1=2
      \raise\ycoord \Einheit\hbox to0pt{\hskip\xcoord \Einheit
        \hbox{\hskip-\PfadD@cke\vrule height1 \Einheit width\PfadD@cke depth0pt}\hss}%
      \advance\ycoord by 1
    \else\ifnum#1=3
      \raise\ycoord \Einheit\hbox to0pt{\hskip\xcoord \Einheit
         \unitlength\Einheit
         \line(1,1){1}\hss}
      \advance\xcoord by 1
      \advance\ycoord by 1
    \else\ifnum#1=4
      \raise\ycoord \Einheit\hbox to0pt{\hskip\xcoord \Einheit
         \unitlength\Einheit
         \line(1,-1){1}\hss}
      \advance\xcoord by 1
      \advance\ycoord by -1
    \else\ifnum#1=5
      \advance\xcoord by -1
      \raise\ycoord \Einheit\hbox to0pt{\hskip\xcoord \Einheit
         \vrule height\Pfadd@cke width1 \Einheit depth\Pfadd@cke\hss}%
    \else\ifnum#1=6
      \advance\ycoord by -1
      \raise\ycoord \Einheit\hbox to0pt{\hskip\xcoord \Einheit
        \hbox{\hskip-\PfadD@cke\vrule height1 \Einheit width\PfadD@cke depth0pt}\hss}%
    \else\ifnum#1=7
      \advance\xcoord by -1
      \advance\ycoord by -1
      \raise\ycoord \Einheit\hbox to0pt{\hskip\xcoord \Einheit
         \unitlength\Einheit
         \line(1,1){1}\hss}
    \else\ifnum#1=8
      \advance\xcoord by -1
      \advance\ycoord by +1
      \raise\ycoord \Einheit\hbox to0pt{\hskip\xcoord \Einheit
         \unitlength\Einheit
         \line(1,-1){1}\hss}
    \fi\fi\fi\fi
    \fi\fi\fi\fi
  \fi\next}
\def\hSSchritt{\leavevmode\raise-.4pt\hbox to0pt{\hss.\hss}\hskip.2\Einheit
  \raise-.4pt\hbox to0pt{\hss.\hss}\hskip.2\Einheit
  \raise-.4pt\hbox to0pt{\hss.\hss}\hskip.2\Einheit
  \raise-.4pt\hbox to0pt{\hss.\hss}\hskip.2\Einheit
  \raise-.4pt\hbox to0pt{\hss.\hss}\hskip.2\Einheit}
\def\vSSchritt{\vbox{\baselineskip.2\Einheit\lineskiplimit0pt
\hbox{.}\hbox{.}\hbox{.}\hbox{.}\hbox{.}}}
\def\DSSchritt{\leavevmode\raise-.4pt\hbox to0pt{%
  \hbox to0pt{\hss.\hss}\hskip.2\Einheit
  \raise.2\Einheit\hbox to0pt{\hss.\hss}\hskip.2\Einheit
  \raise.4\Einheit\hbox to0pt{\hss.\hss}\hskip.2\Einheit
  \raise.6\Einheit\hbox to0pt{\hss.\hss}\hskip.2\Einheit
  \raise.8\Einheit\hbox to0pt{\hss.\hss}\hss}}
\def\dSSchritt{\leavevmode\raise-.4pt\hbox to0pt{%
  \hbox to0pt{\hss.\hss}\hskip.2\Einheit
  \raise-.2\Einheit\hbox to0pt{\hss.\hss}\hskip.2\Einheit
  \raise-.4\Einheit\hbox to0pt{\hss.\hss}\hskip.2\Einheit
  \raise-.6\Einheit\hbox to0pt{\hss.\hss}\hskip.2\Einheit
  \raise-.8\Einheit\hbox to0pt{\hss.\hss}\hss}}
\def\SPfad(#1,#2),#3\endSPfad{\unskip\leavevmode
  \xcoord#1 \ycoord#2 \ZeichneSPfad#3\endSPfad}
\def\ZeichneSPfad#1{\ifx#1\endSPfad\let\next\relax
  \else\let\next\ZeichneSPfad
    \ifnum#1=1
      \raise\ycoord \Einheit\hbox to0pt{\hskip\xcoord \Einheit
         \hSSchritt\hss}%
      \advance\xcoord by 1
    \else\ifnum#1=2
      \raise\ycoord \Einheit\hbox to0pt{\hskip\xcoord \Einheit
        \hbox{\hskip-2pt \vSSchritt}\hss}%
      \advance\ycoord by 1
    \else\ifnum#1=3
      \raise\ycoord \Einheit\hbox to0pt{\hskip\xcoord \Einheit
         \DSSchritt\hss}
      \advance\xcoord by 1
      \advance\ycoord by 1
    \else\ifnum#1=4
      \raise\ycoord \Einheit\hbox to0pt{\hskip\xcoord \Einheit
         \dSSchritt\hss}
      \advance\xcoord by 1
      \advance\ycoord by -1
    \else\ifnum#1=5
      \advance\xcoord by -1
      \raise\ycoord \Einheit\hbox to0pt{\hskip\xcoord \Einheit
         \hSSchritt\hss}%
    \else\ifnum#1=6
      \advance\ycoord by -1
      \raise\ycoord \Einheit\hbox to0pt{\hskip\xcoord \Einheit
        \hbox{\hskip-2pt \vSSchritt}\hss}%
    \else\ifnum#1=7
      \advance\xcoord by -1
      \advance\ycoord by -1
      \raise\ycoord \Einheit\hbox to0pt{\hskip\xcoord \Einheit
         \DSSchritt\hss}
    \else\ifnum#1=8
      \advance\xcoord by -1
      \advance\ycoord by 1
      \raise\ycoord \Einheit\hbox to0pt{\hskip\xcoord \Einheit
         \dSSchritt\hss}
    \fi\fi\fi\fi
    \fi\fi\fi\fi
  \fi\next}
\def\Koordinatenachsen(#1,#2){\unskip
 \hbox to0pt{\hskip-.5pt\vrule height#2 \Einheit width.5pt depth1 \Einheit}%
 \hbox to0pt{\hskip-1 \Einheit \xcoord#1 \advance\xcoord by1
    \vrule height0.25pt width\xcoord \Einheit depth0.25pt\hss}}
\def\Koordinatenachsen(#1,#2)(#3,#4){\unskip
 \hbox to0pt{\hskip-.5pt \ycoord-#4 \advance\ycoord by1
    \vrule height#2 \Einheit width.5pt depth\ycoord \Einheit}%
 \hbox to0pt{\hskip-1 \Einheit \hskip#3\Einheit 
    \xcoord#1 \advance\xcoord by1 \advance\xcoord by-#3 
    \vrule height0.25pt width\xcoord \Einheit depth0.25pt\hss}}
\def\Gitter(#1,#2){\unskip \xcoord0 \ycoord0 \leavevmode
  \LOOP\ifnum\ycoord<#2
    \loop\ifnum\xcoord<#1
      \raise\ycoord \Einheit\hbox to0pt{\hskip\xcoord \Einheit\Punkt\hss}%
      \advance\xcoord by1
    \repeat
    \xcoord0
    \advance\ycoord by1
  \REPEAT}
\def\Gitter(#1,#2)(#3,#4){\unskip \xcoord#3 \ycoord#4 \leavevmode
  \LOOP\ifnum\ycoord<#2
    \loop\ifnum\xcoord<#1
      \raise\ycoord \Einheit\hbox to0pt{\hskip\xcoord \Einheit\Punkt\hss}%
      \advance\xcoord by1
    \repeat
    \xcoord#3
    \advance\ycoord by1
  \REPEAT}
\def\Label#1#2(#3,#4){\unskip \xdim#3 \Einheit \ydim#4 \Einheit
  \def\lo{\advance\xdim by-.5 \Einheit \advance\ydim by.5 \Einheit}%
  \def\llo{\advance\xdim by-.25cm \advance\ydim by.5 \Einheit}%
  \def\loo{\advance\xdim by-.5 \Einheit \advance\ydim by.25cm}%
  \def\o{\advance\ydim by.25cm}%
  \def\ro{\advance\xdim by.5 \Einheit \advance\ydim by.5 \Einheit}%
  \def\rro{\advance\xdim by.25cm \advance\ydim by.5 \Einheit}%
  \def\roo{\advance\xdim by.5 \Einheit \advance\ydim by.25cm}%
  \def\l{\advance\xdim by-.30cm}%
  \def\r{\advance\xdim by.30cm}%
  \def\lu{\advance\xdim by-.5 \Einheit \advance\ydim by-.6 \Einheit}%
  \def\llu{\advance\xdim by-.25cm \advance\ydim by-.6 \Einheit}%
  \def\luu{\advance\xdim by-.5 \Einheit \advance\ydim by-.30cm}%
  \def\u{\advance\ydim by-.30cm}%
  \def\ru{\advance\xdim by.5 \Einheit \advance\ydim by-.6 \Einheit}%
  \def\rru{\advance\xdim by.25cm \advance\ydim by-.6 \Einheit}%
  \def\ruu{\advance\xdim by.5 \Einheit \advance\ydim by-.30cm}%
  #1\raise\ydim\hbox to0pt{\hskip\xdim
     \vbox to0pt{\vss\hbox to0pt{\hss$#2$\hss}\vss}\hss}%
}
\numberwithin{equation}{section}
\newtheorem{Theorem}{Theorem}
\newtheorem{Proposition}[Theorem]{Proposition}
\newtheorem{Corollary}[Theorem]{Corollary}
\newtheorem*{Fact}{Fact}
\newtheorem{Conjecture}[Theorem]{Conjecture}
\theoremstyle{definition}
\theoremstyle{remark}
\newtheorem*{Remark}{Remark}
\def\al{\alpha}
\def\be{\beta}
\def\la{\lambda}
\def\si{\sigma}
\def\ta{\tau}
\def\Si{\Sigma}
\def\Om{\Omega}
\def\today{\ifcase\month\or
 January\or February\or March\or April\or May\or June\or
 July\or August\or September\or October\or November\or December\fi
 \space\number\day, \number\year}
\def\({\left(}
\def\){\right)}
\def\[{\left[}
\def\]{\right]}
\def\3{\ss}
\begin{document}

\newbox\Adr
\setbox\Adr\vbox{
\centerline{\sc Ting Guo$^{\ast}$, Christian Krattenthaler$^{\dagger}$,
  Yi Zhang$^{\ddagger}$ } 
\vskip18pt
\centerline{\footnotesize$^\ast$College of Mathematics and Statistics, Hunan Normal University}
\centerline{\footnotesize$^\dagger$Fakult\"at f\"ur Mathematik, 
Universit\"at Wien}
\centerline{\footnotesize$^\ddagger$Johann Radon Institute for Computational
  and Applied Mathematics (RICAM),}
\centerline{\footnotesize Austrian Academy of Sciences}
}

\title
{On (shape-)Wilf-equivalence for words}

\author[T. Guo, C. Krattenthaler and Y. Zhang]{\box\Adr}

\address{$^\ast$Key Laboratory of High Performance Computing and Stochastic Information Processing
(HPCSIP, Ministry of Education of China), College of Mathematics and Statistics, Hunan Normal University, Changsha, Hunan 410081, P. R. China}
\email{guoting@hunnu.edu.cn}

\address{$^\dagger$Fakult\"at f\"ur Mathematik, Universit\"at Wien,
Oskar-Morgenstern-Platz 1, 1090 Vienna, Austria.}
\urladdr{http://www.mat.univie.ac.at/~kratt}

\address{$^\ddagger$Johann Radon Institute for Computational and Applied Mathematics (RICAM), Austrian Academy of Sciences, Altenberger Str. 69, Linz, Austria}

\thanks{
$^\ast$The work of the first author was done at the University of
  Vienna while she was visiting the Fakult\"at f\"ur Mathematik,
  supported by the China Scholarship Council. 
 \\ 
\indent$^\dagger$The work of the second author is partially supported by the
  Austrian Science Fund FWF, grant SFB F50 
  (Special Research Programme ``Algorithmic and Enumerative
  Combinatorics"). {\it WWW}: {\tt
    http://www.mat.univie.ac.at/\lower0.5ex\hbox{\~{}}kratt}.\\ 
\indent$^\ddagger$The third author is supported by the Austrian Science Fund
  FWF, grant P29467-N32. {\it Email}: {\tt zhangy@amss.ac.cn}}

\subjclass[2010]{Primary 05A15;
 Secondary 05A17 05A19 05E10}
\keywords{Words, pattern avoidance, Ferrers diagrams, Ferrers shapes, 
growth diagrams, 
Wilf-equivalence, shape-Wilf-equivalence}

\begin{abstract}
Stankova and West showed that for any non-negative integer~$s$
and any permutation $\gamma$
of $\{4,5,\dots,s+3\}$ there are as many permutations that avoid 
$231\gamma$ as there are that avoid $312\gamma$.
We extend this result to the setting of words.
\end{abstract}

\maketitle

\section{Introduction} 
\label{sec:Intro}

Let $\pi=\pi_1\pi_2\cdots \pi_n$ be a permutation of $\{1,2,\dots,n\}$
and $\si=\si_1\si_2\cdots\si_r$ be a permutation of $\{1,2,\dots,r\}$,
$r\le n$. We say that the permutation $\pi$ {\it contains the pattern}
$\si$, if there 
are indices $1\le i_1<i_2<\dots<i_r\le n$ such that 
$\pi_{i_1}\pi_{i_2}\cdots \pi_{i_r}$ is in the same relative order as
$\si_1\si_2\cdots\si_r$. Otherwise, $\pi$ is said to {\it avoid the pattern}
$\si$, or, alternatively, we say that $\pi$ is {\it $\si$-avoiding}.
As usual, we write $S_n$ for the set of all permutations of $\{1,2,\dots,n\}$,
and $S_n(\si)$ for the set of permutations in $S_n$ that avoid $\si$. 

The enumeration of permutations which avoid certain patterns has been
a flourishing research subject since the seminal
article \cite{SiScAA} of Simion and Schmidt, where
this research subject was ``defined."
The reader is referred to \cite{KitaAA} and \cite[Chapters~4
  and~5]{BonaAZ} for 
in-depth accounts of the enumeration of pattern avoiding permutations.

We define the {\it direct sum} $\si\oplus\ta$ of two
permutations $\si=\si_1\si_2\cdots\si_r\in S_r$ and 
$\ta=\ta_1\ta_2\cdots\ta_s\in S_s$
as 
$$\si\oplus\ta=\si_1\si_2\cdots\si_r(\ta_1+r)(\ta_2+r)\cdots(\ta_s+r).$$

The starting point for the work on this paper was an email of
Doron Zeilberger \cite{ZeilAZ} to the second author
saying\footnote{Notation is slightly adapted in order to conform with
the notation of our paper.}:

{\bigskip\parindent0cm
\raggedright\leftskip1cm\rightskip1cm
\tt
According to http://en.wikipedia.org/wiki/Permutation\underline{ }pattern,

Backelin, West \& Xin (2007) proved that for any permutation\newline 
beta and any positive integer k, the permutations\newline 
12..k "+" beta
and
k....21 "+" beta
are Wilf equivalent,

\medskip
and in 2002 Stankova and West   proved that 
231 "+" beta 
and \newline
312 "+" beta are Wilf equivalent.

\medskip
According to Vince Vatter and Jonathan Bloom you have nice\newline 
proofs at least of the 
first result, in "Growth Diagrams, ...\newline Ferrers Shapes".

\medskip
My main question is whether it is true, and whether there\newline
exists a proof, of the above
two results generalized to words\newline 
with $a_1$ 1's, $a_2$ 2's, ..., $a_m$ $m$'s,
where the original case is\newline $a_1=...=a_m=1$. 
\bigskip

}

\noindent
Here, two patterns $\si$ and $\tau$ are said to be {\it Wilf-equivalent},
denoted by $\si\sim\tau$, if $\vert S_n(\si)\vert=\vert
S_n(\tau)\vert$ for all positive integers~$n$. What Zeilberger
refers to are the following two results.

\begin{Theorem}[{\cite[Theorem~2.1]{BaWXAA}}] 
\label{thm:BackWeXi}
For all positive integers $k$, all non-negative integers $s$, 
and all patterns $\be\in S_s$,
the patterns $12\cdots k\oplus\be$ and $k\cdots21\oplus\be$ are
Wilf-equivalent. 
\end{Theorem}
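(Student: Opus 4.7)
The plan is to pass to the stronger notion of shape-Wilf-equivalence. A \emph{Ferrers shape} $F$ is a left- and bottom-justified array of cells with weakly decreasing column heights (reading left to right); a \emph{transversal} of $F$ is a $0/1$-filling of its cells with exactly one $1$ in each row and in each column; such a transversal contains a pattern $\pi\in S_m$ if some $m\times m$ sub-board of $F$ contains precisely the $1$-entries of the permutation matrix of $\pi$. Two patterns $\sigma,\tau$ are \emph{shape-Wilf-equivalent} if for every Ferrers shape the numbers of $\sigma$- and of $\tau$-avoiding transversals agree. The theorem then follows from two statements: first, that $12\cdots k$ and $k\cdots 21$ are shape-Wilf-equivalent; second, that shape-Wilf-equivalence of two patterns propagates under direct sum with any fixed pattern~$\beta$ on the right.

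For the first statement, I would run Fomin's growth-diagram version of RSK on transversals of $F$. Fomin's local rules label each lattice point of $F$ by a partition so that partitions grow by at most one cell along each edge; by an extension of Greene's theorem, the length of the first column of the partition at the corner $(i,j)$ equals the length of the longest increasing subsequence of the transversal inside the lower-left rectangle cut out at $(i,j)$, while the length of the first row records the longest decreasing subsequence. Thus a transversal of $F$ avoids $12\cdots k$ iff every labelling partition has first column of length $<k$, and it avoids $k\cdots 21$ iff every first row has length $<k$. The conjugation map that replaces every partition in a growth by its transpose is an involution on growths inside $F$ (because conjugation commutes with Fomin's local rules and transforms the outer boundary of $F$ consistently), and it exchanges the two forbidden-pattern conditions. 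This gives the desired shape-preserving bijection.

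For the second statement, I would argue as follows. Given $\pi\in S_n$, either $\pi$ avoids $\beta$---in which case it lies in both $S_n(\sigma\oplus\beta)$ and $S_n(\tau\oplus\beta)$---or it contains a canonical $\beta$-occurrence, chosen, say, as the lexicographically smallest $\beta$-copy among all its occurrences. The entries of $\pi$ that are strictly below and to the left of every entry of this canonical copy form a transversal of a Ferrers subshape of the ambient square, and $\pi$ avoids $\sigma\oplus\beta$ iff this restricted transversal avoids $\sigma$. Summing the shape-Wilf equality for $\sigma$ and $\tau$ over all possible canonical $\beta$-occurrences, and adding the common contribution of the $\beta$-avoiding permutations, yields $|S_n(\sigma\oplus\beta)|=|S_n(\tau\oplus\beta)|$.

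The main obstacle is the first step: extending the RSK/Greene correspondence from square permutation matrices to transversals of an arbitrary Ferrers board requires the full strength of Fomin's growth-diagram machinery, together with the non-trivial verification that partition conjugation intertwines with the local growth rules and respects the boundary shape of $F$. The second step is conceptually cleaner, but one must choose the canonical $\beta$-occurrence so that the residual region is genuinely Ferrers-shaped and that the decomposition is bijective rather than overcounting; it is in specifying this canonical choice correctly that most of the bookkeeping of the argument resides.
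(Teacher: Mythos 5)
Your overall architecture -- pass to shape-Wilf-equivalence, prove it for $12\cdots k$ versus $k\cdots 21$, and then propagate it through direct sums -- is exactly how the statement is obtained in the literature and presented in this paper (Proposition~\ref{prop:sW} combined with Theorem~\ref{thm:I-J}, both from \cite{BaWXAA}, the growth-diagram proof of the latter being \cite{KratCE}). However, both of your ingredient arguments have problems. In the first step, partition conjugation does \emph{not} commute with Fomin's local rules: for the $2\times2$ identity transversal the growth diagram has top-right label $(2)$, with the three labels around the top-right cell all equal to $(1)$ and the cell containing a $1$; after conjugating every label the local rule would still force the output $(2)$, not $(1,1)$, so the conjugated array is not a growth diagram for the same rules. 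The correct growth-diagram argument conjugates only the sequence of partitions along the right/up border of the shape: transversals correspond bijectively to admissible border sequences, admissibility is conjugation-invariant, and bounded increasing (resp.\ decreasing) chains translate into a bound on the number of columns (resp.\ rows) of every partition in the sequence. So the statement you need at this step is true, but not for the reason you give.

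The second step contains the more serious gap. Avoidance of $\sigma\oplus\beta$ cannot be tested on the region south-west of a single canonical $\beta$-occurrence: already for $\beta=1$, $\sigma=1$ and $\pi=12$, the lexicographically smallest occurrence has empty south-west region (which vacuously avoids $\sigma$), yet $\pi$ contains $\sigma\oplus\beta=12$; and since distinct occurrences have incomparable south-west regions, no canonical choice repairs this. Moreover, the set of entries lying strictly below and to the left of \emph{every} entry of one occurrence is a rectangle, hence (after deleting empty rows and columns) a square board; if your decomposition were valid it would therefore show that ordinary Wilf-equivalence propagates under direct sums, which is false ($123$ and $132$ are Wilf-equivalent, but $1234$ and $1324$ are not). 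Finally, replacing the filling inside the region can create a lexicographically smaller $\beta$-occurrence, so the substitution would not be invertible. The actual proof of Proposition~\ref{prop:sW} declares a cell white when some $\beta$-occurrence lies strictly north-east of it; the white cells form a Ferrers board (this is where arbitrary Ferrers shapes, not squares, genuinely enter), $\pi$ avoids $\sigma\oplus\beta$ if and only if the restriction of the transversal to the white board avoids $\sigma$, and one must prove that the white board and the filling outside it are unchanged when the white filling is replaced. These points are the heart of the propagation lemma, not mere bookkeeping, and they are missing from your sketch.
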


\begin{Theorem}[{\cite[Theorem~1]{StWeAA}}] 
\label{thm:StWe}
For all non-negative integers $s$ and all patterns $\be\in S_s$, the
patterns $231\oplus \be$ and $312\oplus\be$ are Wilf-equivalent.  
\end{Theorem}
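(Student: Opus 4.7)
The plan is to promote the pattern equivalence to the stronger notion of \emph{shape-Wilf-equivalence}. A \emph{Ferrers shape} is a left-justified array of cells whose row lengths are weakly decreasing from top to bottom, and a \emph{full $0/1$-filling} of such a shape is an assignment of $0$'s and $1$'s to its cells with exactly one~$1$ per row and per column. A pattern $\si\in S_r$ \emph{occurs} in such a filling if some $r$ rows and $r$ columns intersect in an $r\x r$ rectangle that lies entirely inside the shape and whose $1$-entries realize the permutation matrix of~$\si$; the patterns $\si$ and $\ta$ are \emph{shape-Wilf-equivalent} if for every Ferrers shape the numbers of $\si$- and $\ta$-avoiding full fillings coincide. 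Granted the shape-Wilf-equivalence of $231$ and $312$, the theorem follows from the standard slicing reduction: inside the permutation matrix of $\pi\in S_n$ one isolates the minimal Ferrers-shaped region whose complement, sitting at the top-right, can contain every potential occurrence of~$\be$; then $\pi$ avoids $\si\oplus\be$ if and only if the Ferrers region is $\si$-avoiding, and applying the shape-Wilf bijection inside while keeping the complement fixed bijects $S_n(231\oplus\be)$ with $S_n(312\oplus\be)$.

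The crux is therefore to prove the shape-Wilf-equivalence of $231$ and $312$. For rectangular shapes (i.e., for ordinary permutations) this is immediate because $312$ is the inverse of $231$, so transposing the permutation matrix suffices; the point is that this naive reflection is no longer available for arbitrary Ferrers shapes. My approach would be to use Fomin-type growth diagrams: label each inner corner of the Ferrers shape with a partition, built up from the empty partition along the outer boundary by the standard local RSK rule. The key observation is that $231$-avoidance, respectively $312$-avoidance, of the filling translates into a local condition at each corner involving only the four surrounding partitions, one being a ``row-version'' and the other a ``column-version'' of the same rule. A local involution exchanging these two conditions corner by corner should then assemble into the desired bijection.

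The main obstacle is the \emph{jagged boundary} of the Ferrers shape. Inside a rectangle, the local swap is the classical $\rho$-involution underlying RSK/Sch\"utzenberger duality and glues globally without issue; but once the shape has nontrivial inner corners, some of the cells required by the local substitution lie outside the shape, and the swap must be redesigned there so as not to push a $1$-entry outside the shape or to destroy the one-per-row-and-column property. Verifying this boundary compatibility, and confirming that the corner-by-corner substitutions globally exchange the row- and column-avoidance conditions, is the delicate combinatorial step; modulo this, the theorem reduces to the slicing argument above.
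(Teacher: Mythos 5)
Your first paragraph follows exactly the route the paper takes: Theorem~\ref{thm:StWe} is obtained by combining Proposition~\ref{prop:sW} (Backelin--West--Xin) with the shape-Wilf-equivalence of $231$ and $312$ (Theorem~\ref{thm:231-312}, due to Stankova and West), and your slicing sketch is essentially the proof of Proposition~\ref{prop:sW}. (One caveat there: the restriction of the permutation matrix to the Ferrers region is in general not a full rook placement, since rows and columns of the region may be empty; one must fix the entries outside, observe that they alone determine the region, and delete the empty rows and columns before invoking shape-Wilf-equivalence --- this is precisely why the stronger notion with exactly one $1$ per row and per column is the right one. But this is the content of the cited proposition, so no harm done.)

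The genuine gap is the crux itself, the shape-Wilf-equivalence of $231$ and $312$, which you do not prove: you propose a Fomin growth-diagram scheme and explicitly defer the decisive step (the ``boundary compatibility'' and the global gluing of the local swaps). Worse, the claim on which the scheme rests --- that $231$-avoidance and $312$-avoidance of a full rook placement translate into \emph{local} conditions at each corner involving only the four surrounding partitions of a growth diagram --- is unsubstantiated and, as far as is known, not true. Growth diagrams record lengths of increasing and decreasing chains (Greene-type statistics); they are the right tool for the equivalence of $12\cdots k$ and $k\cdots21$ (that is how Theorem~\ref{thm:6} is proved), but no local growth rule is known that detects the non-monotone patterns $231$ or $312$, and transposition/inversion arguments indeed die on non-square shapes, as you note. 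The known proofs of Theorem~\ref{thm:231-312} are of a different nature: Stankova and West's original inductive argument, and Bloom and Saracino's bijection recalled in Section~\ref{sec:231-312}, which encodes a $231$- (respectively $312$-) avoiding full rook placement by the \emph{global} border sequence $I(R)$ of longest-increasing-chain lengths, characterizes the attainable sequences by the $231$- respectively $312$-conditions, and matches the two families via \eqref{eq:alpha}. Until you either exhibit your local rules and prove that they glue along a jagged boundary, or fall back on one of these known proofs, the hardest ingredient of the theorem is assumed rather than proved.
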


In fact, these two theorems explain all instances of 
Wilf-equivalence (of single patterns) that are known up to this date,
except for $1423\sim 3142$ (and equivalent Wilf-equivalence relations
arising from reversal and/or complementation of patterns;
cf.\ \cite[pp.~134/135]{BaWXAA}).

\medskip
Now, clearly, the notion of pattern avoidance can be straightforwardly
adapted to {\it words} over the alphabet of positive integers.
Given a word $w=w_1w_2\cdots w_n$ 
and a word $x=x_1x_2\cdots x_r$ with the letters $w_i$ and $x_i$ taken
from the positive integers, we say that the word $w$ 
{\it contains the pattern} $x$ if there 
are indices $1\le i_1<i_2<\dots<i_r\le n$ such that 
$w_{i_1}w_{i_2}\cdots w_{i_r}$ is in the same relative order as
$x_1x_2\cdots x_r$. Otherwise, $w$ is said to {\it avoid the pattern}
$x$. Since in words there can be equal letters, by ``same relative
order" we mean that, if there is an equality $x_s=x_t$ then there
must also hold the equality $w_{i_s}=w_{i_t}$. Thus, Zeilberger asks
whether Theorems~\ref{thm:BackWeXi} and~\ref{thm:StWe} extend to
words with a given number of 1's, 2's, \dots, $m$'s, the theorems themselves
then being the special case where one considers words with exactly
one~1, exactly one~2, \dots, and exactly one~$m$.

As it turns out, the first question had already been answered by
Jel\'\i nek and Mansour in \cite{JeMaAA}.
The purpose of this note is to answer the second question.

Before we describe the corresponding results, it is however helpful to recall that the
key concept behind the proofs of Theorems~\ref{thm:BackWeXi} 
and~\ref{thm:StWe} is {\it shape-Wilf-equivalence}, a concept introduced in
\cite{BaWXAA} (already non-explicitly present in \cite{BaWeAA};
see
Section~\ref{sec:def} for its definition). This is a stronger notion
than Wilf-equivalence. In particular, if two permutations $\si$ and
$\tau$ are {\it shape-Wilf-equivalent}, then they are also
{\it Wilf-equivalent}. The significance of shape-Wilf-equivalence
is explained by the following proposition.

\begin{Proposition}[{\cite[Prop.~2.3]{BaWXAA}}] 
\label{prop:sW}
If the permutations $\beta$ and $\gamma$ are
  shape-Wilf-equivalent, then for every permutation $\delta$,
  $\beta\oplus\delta$ and $\gamma\oplus\delta$ are also
  shape-Wilf-equivalent. 
\end{Proposition}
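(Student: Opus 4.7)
The plan is to construct, for every Ferrers shape $\la$, a bijection between the standard fillings of $\la$ that avoid $\be\oplus\de$ and those that avoid $\ga\oplus\de$, by invoking the given shape-Wilf bijection between $\be$- and $\ga$-avoiding fillings on a suitably chosen sub-Ferrers shape of $\la$ determined by a canonical occurrence of $\de$. The hypothesis supplies, for every Ferrers shape $\mu$, a bijection $\Ph_\mu$ between $\be$-avoiding and $\ga$-avoiding standard fillings of $\mu$, so the task is to carve out the correct sub-shape from each filling of $\la$.

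Given a standard filling $F$ of $\la$ that avoids $\be\oplus\de$, I would first single out a distinguished occurrence $D$ of $\de$ in $F$, chosen to lie ``as far to the northeast as possible.'' Concretely, I would scan the columns of $\la$ from right to left and greedily pick the rightmost columns, and within each column the highest-valued row, that can participate in an occurrence of $\de$; this produces a unique $D$ whenever $F$ contains $\de$. Let $R$ and $C$ be the row and column index sets of $D$, and let $\mu$ be the sub-shape of $\la$ consisting of all cells $(i,j)\in\la$ with $i\notin R$, $j\notin C$, that lie strictly southwest of every cell of $D$. Two claims are the pillars of the construction: first, $\mu$ is again a Ferrers shape; second, the restriction $F|_\mu$ is a standard filling of $\mu$ that avoids $\be$, since any $\be$-pattern in $\mu$ together with $D$ would combine into a $\be\oplus\de$-pattern in $F$. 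If $F$ contains no $\de$ at all, it avoids both $\be\oplus\de$ and $\ga\oplus\de$ simultaneously and the bijection acts as the identity on it.

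Now apply $\Ph_\mu$ to $F|_\mu$ and reinsert the result into $\la$, leaving all entries outside $\mu$ (in particular the cells forming $D$) unchanged; call the resulting filling $F'$. To prove that $F\mapsto F'$ is the desired bijection, one must verify (i) that $F'$ avoids $\ga\oplus\de$, and (ii) that the same greedy selection applied to $F'$ again returns $D$, so that running $\Ph_\mu^{-1}$ inverts the construction. The main obstacle is (ii), coupled with the Ferrers-ness of $\mu$: one must show that deleting the rows and columns of $D$ from $\la$ leaves a sub-shape to the southwest of $D$ whose cells really do form a Ferrers shape, and that no local modification inside $\mu$ can create a $\de$-occurrence in $F'$ that beats $D$ in the greedy northeast sense. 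Both assertions reduce to a careful combinatorial analysis of the staircase boundary of $\la$ and of the extremality property used to define $D$; once they are established, (i) is immediate, since any occurrence of $\ga\oplus\de$ in $F'$ would have to place a $\ga$-pattern inside $\mu$, contradicting that $\Ph_\mu(F|_\mu)$ is $\ga$-avoiding.
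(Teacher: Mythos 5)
Your construction has a fatal well-definedness gap at the very step you dismiss as ``immediate''. An occurrence of $\ga\oplus\de$ in $F'$ need not use your distinguished occurrence $D$ as its $\de$-part: it only needs \emph{some} $\de$-occurrence to its northeast, and the $\ga$-part then only has to sit southwest of \emph{that} occurrence, not inside your board $\mu$ (the southwest quadrant of the greedily chosen, ``most northeastern'' $D$). Since your surgery leaves everything outside $\mu$ untouched, and $F$ was only assumed to avoid $\be\oplus\de$ (not $\ga\oplus\de$), such a configuration survives. Concretely, take $\be=12$, $\ga=21$, $\de=1$, $\la$ the $5\times5$ square, and $F=M(35142)$. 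Then $F$ avoids $123=\be\oplus\de$; your $D$ is the rightmost $1$ (row $2$, column $5$), $\mu$ is row $1$, columns $1$--$4$, which contains a single $1$ whose position cannot change (its column support is forced), so $F'=F$; but $F$ contains $213=\ga\oplus\de$ via the entries $3,1,4$ in columns $1,3,4$. So the map does not land in $W_\la(\ga\oplus\de)$, and no refinement of the greedy choice of a single $D$ can fix this: the obstruction is structural. The correct region to carve out is not the quadrant of one occurrence but $B(F)$, the set of \emph{all} cells of $\la$ whose strict northeast region contains some $\de$-occurrence. This set is southwest-closed, hence itself a Ferrers board; $F$ avoids $\be\oplus\de$ if and only if the restriction of $F$ to $B(F)$ avoids $\be$; and the crucial lemma (which replaces your item (ii)) is that $B(F)$ is unchanged when the filling inside it is replaced, because membership of a cell in $B(F)$ can always be witnessed by a $\de$-occurrence all of whose $1$'s lie outside $B(F)$. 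This is the argument of Backelin--West--Xin (and, in the word setting, of Jel\'\i nek--Mansour's Lemma~2.1, i.e.\ Proposition~\ref{prop:5}); the present paper only cites the result and gives no proof of its own.

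A secondary but real defect: even granting your $\mu$, the restriction $F|_\mu$ is \emph{not} a full rook placement of $\mu$ --- rows and columns of $\mu$ whose $1$ lies outside $\mu$ are empty --- so the hypothesis bijection $\Ph_\mu$, which by the definition of shape-Wilf-equivalence acts on fillings with exactly one $1$ in every row and every column, cannot be applied as stated. One must first delete the empty rows and columns, check that what remains is again a Ferrers shape, apply the equivalence there, and re-embed using the same row and column supports; this step is also needed in the correct proof and should not be glossed over.
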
 

In plain terms, given two shape-Wilf-equivalent permutations, 
by forming direct sums we can
obtain a whole infinite family of pairs of shape-Wilf-equivalent
permutations, which are automatically also pairs of Wilf-equivalent
permutations.
In particular, Theorems~\ref{thm:BackWeXi} and~\ref{thm:StWe} follow from
the following two results. 

\begin{Theorem}[{\cite[Theorem~2.1]{BaWXAA}}] 
\label{thm:I-J}
For every positive integer
  $k$, the permutations $12\cdots k$ and $k\cdots21$ are
  shape-Wilf-equivalent. 
\end{Theorem}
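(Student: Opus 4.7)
The plan is to use Fomin's growth-diagram formulation of RSK for $0/1$-fillings of Ferrers shapes. To any filling $T$ of a Ferrers shape $F$ with entries in $\{0,1\}$ having at most one $1$ in each row and column, attach a partition $\mu_c$ to every lattice corner $c$ of $F$ by Fomin's local rule, with $\mu_c=\emptyset$ along the southern and western boundary. The classical fact we shall invoke is that the length of the longest NE-chain formed by the $1$-entries of $T$ can be read off from the first rows of the partitions sitting on the NE-staircase boundary of $F$, while the length of the longest SE-chain of $1$-entries is read off in the same way from the first columns of those partitions. Hence $T$ avoids $12\cdots k$ if and only if the partitions on the NE boundary all have first row of length $\le k-1$, and $T$ avoids $k\cdots 21$ if and only if those partitions have first column of length $\le k-1$.

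The theorem is therefore implied by the existence of an involution $\Phi_F$ on the set of valid $0/1$-fillings of $F$ which replaces every attached partition $\mu_c$ by its conjugate $(\mu_c)'$; such an involution manifestly exchanges the two avoidance conditions. For $F$ a rectangle the map $\Phi_F$ is classical: RSK identifies fillings with pairs $(P,Q)$ of standard Young tableaux of the same shape $\la\subseteq F$, and the involution sends $(P,Q)$ to $(P',Q')$. The resulting filling has precisely the conjugated lattice partitions, which settles the rectangular case.

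To extend this to arbitrary Ferrers shapes, we propose to define $\Phi_F$ cell-locally: at every unit cell of $F$, conjugate all four partitions sitting at its corners while leaving the cell entry $x\in\{0,1\}$ unchanged. The main obstacle, and the technical heart of the argument, is to verify that Fomin's local rule is self-dual under conjugation, i.e., that whenever a quadruple $(\la\subseteq\mu,\nu\subseteq\rh;\,x)$ at a cell satisfies Fomin's rule, so does $(\la'\subseteq\mu',\nu'\subseteq\rh';\,x)$. Fomin's rule is written asymmetrically because, in the subcase $\mu=\nu$ where a new box must be added to form $\rh$, the rule prescribes the \emph{row} in which the new box appears; a short case analysis, however, shows that conjugation systematically swaps the roles of ``row'' and ``column'' and preserves the validity of the rule in every case. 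With this local self-duality in hand, $\Phi_F$ is well-defined globally, is tautologically an involution, and exchanges the two avoidance conditions, thereby proving that $12\cdots k$ and $k\cdots 21$ are shape-Wilf-equivalent.
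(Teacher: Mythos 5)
Your overall strategy --- encode a filling via Fomin growth diagrams, observe that avoidance of $12\cdots k$ (resp.\ $k\cdots 21$) is equivalent to all partitions along the NE border having first row (resp.\ first column) of length at most $k-1$, and then conjugate --- is exactly the route of the proof this paper relies on (Krattenthaler, \emph{Growth diagrams, and increasing and decreasing chains in fillings of Ferrers shapes}, the source cited for Theorem~\ref{thm:6}). However, the step you yourself single out as ``the technical heart'' is not a verification that goes through: Fomin's local rule is \emph{not} self-dual under conjugation, so the cell-local map you propose is not well defined. The asymmetric subcases are precisely the problem. If $\lambda=\mu=\nu$ and $x=1$, the rule sets $\rho=\lambda$ plus a box in the \emph{first row}; applying the rule to the conjugates with the same $x$ again adds a box in the first row of $\lambda'$, whereas $\rho'$ has a new box in the first \emph{column} of $\lambda'$, and these differ as soon as $\lambda\neq\emptyset$. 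Concretely, take the $2\times 2$ identity rook placement (ones in cells $(1,1)$ and $(2,2)$): its corner labels are $\emptyset$ on the south/west border, $(1)$ at the three interior-adjacent corners, and $(2)$ at the NE corner. Conjugating all labels while keeping the cell entries gives labels $\lambda=\mu=\nu=(1)$, $x=1$, $\rho=(1,1)$ at the NE cell, which violates the rule (it forces $\rho=(2)$). So there is no involution that fixes the filling's entries cellwise and conjugates every corner label; the quadruple-by-quadruple ``short case analysis'' you appeal to fails in cases (F3)/(F4). The same example also shows that even in the rectangular case the map $(P,Q)\mapsto(P^t,Q^t)$ does \emph{not} conjugate the interior corner labels (the image of the identity is $21$, whose label at the corner $(1,1)$ is $\emptyset$, not $(1)$); it only conjugates the labels on the outer NE boundary.

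The correct repair --- and this is what the cited proof actually does --- is to conjugate \emph{only} the sequence of partitions along the NE border and then reconstruct the filling by running the local rules backwards from the border towards the SW corner. The needed ingredients are: (i) a full rook placement of a Ferrers shape is uniquely recoverable from its border sequence (backward local rules), (ii) the set of admissible border sequences (consecutive partitions differing by adding a box when one moves north and deleting a box when one moves east, in the appropriate sense) is stable under termwise conjugation, and (iii) the Greene-type theorem identifying first rows/columns of the border labels with longest NE/SE chains in the maximal rectangles below-left of the border vertices --- the latter being exactly what makes the argument sensitive to the shape, since an occurrence of $k\cdots21$ must fit inside a rectangle contained in $\lambda$. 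The resulting bijection changes the interior labels and the positions of the $1$'s in a non-local way; it is not an involution cell by cell, but it does exchange the two avoidance conditions, which is all that is needed.
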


\begin{Theorem}[{\cite[Theorem~1]{StWeAA}}] 
\label{thm:231-312}
The permutations $231$ and
  $312$ are shape-Wilf-equi\-va\-lent.  
\end{Theorem}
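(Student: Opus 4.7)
The goal is to show that for every Ferrers shape $F$, the number of $231$-avoiding transversals of $F$ equals the number of $312$-avoiding transversals of $F$. My plan is to construct an explicit bijection $\Phi_F$ between these two sets, proceeding by induction on the number of rows of $F$.

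For the inductive step, given a $231$-avoiding transversal $T$ of $F$, I would look at the 1 in the topmost row of $F$, say in column~$c$. Removing this row yields a smaller Ferrers shape $F'$ with a sub-transversal $T'$ that still avoids $231$. By induction, $\Phi_{F'}(T')$ is a well-defined $312$-avoiding transversal of $F'$, and the task becomes to reinsert the topmost row with its 1 in an appropriate column~$c'$ so that the resulting filling of $F$ is $312$-avoiding, and so that every $312$-avoiding transversal arises exactly once in this way. The rule determining $c'$ is governed by how the distinguished top-row 1 interacts with the 1s in $\Phi_{F'}(T')$: whereas $231$-avoidance forbids a 1 strictly left of column $c$ to lie above a 1 strictly right of column $c$, $312$-avoidance forbids the dual configuration. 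The column $c'$ is thus determined by the minimal local adjustment that converts the first constraint into the second.

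A conceptually cleaner alternative is via Fomin's growth-diagram framework. Encode each transversal of $F$ by the sequence of partitions attached to the lattice points of the circumscribing rectangle, produced by standard RSK-type local growth rules, with empty partitions imposed on the southwest boundary of $F$. Then $231$-avoidance translates to a restriction on these partitions (roughly, that a particular local configuration never occurs), and $312$-avoidance to a dual restriction. Replacing the local growth rule at each cell of $F$ by a modified rule that interchanges the two restrictions should produce the desired bijection globally.

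The main obstacle in either approach is the interaction between the local swap and the non-rectangular Ferrers boundary. In the direct inductive approach it appears as the need to verify that the reinsertion does not create $312$-patterns involving rows far below the top one, which requires a global analysis even though the swap is local. In the growth-diagram approach it appears as the need to check that the modified rules respect the empty-partition boundary condition imposed by the Ferrers shape, so that the bijection remains well-defined on all Ferrers shapes and not merely on rectangles.
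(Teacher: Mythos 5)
There is a genuine gap: what you have written is a plan, not a proof, and the step you yourself flag as ``the main obstacle'' is precisely the content of the theorem. In the inductive approach, the entire argument hinges on the unspecified rule ``the column $c'$ is determined by the minimal local adjustment that converts the first constraint into the second.'' No such local rule is exhibited, and none is known to work: reinserting the top-row $1$ so as to kill $312$-patterns cannot be decided locally, because a $312$-pattern created by the new $1$ may involve two $1$'s arbitrarily far down and to the right, and the set of admissible columns for $c'$ depends on the whole transversal below, not just on a neighbourhood of column $c$. You would also need to prove that your (unspecified) rule is injective and surjective onto $312$-avoiding transversals of $F$, which is not addressed. Note that $231$ and $312$ are not exchanged by any symmetry of the square that preserves Ferrers shapes, so there is no cheap local swap; this is exactly why the Stankova--West result was hard. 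The growth-diagram alternative has the same problem in a different guise: Fomin-type growth diagrams record longest increasing and decreasing chains, which is why they yield the shape-Wilf-equivalence of $12\cdots k$ and $k\cdots 21$ (Theorem~\ref{thm:6} above), but $231$- and $312$-avoidance are not captured by chain-length statistics of the growth diagram, and no ``modified local rule interchanging the two restrictions'' is known; the word ``should'' in that paragraph is doing all the work.

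For comparison, the argument this paper relies on (the bijection of Bloom and Saracino, recalled in Section~\ref{sec:231-312}) supplies exactly the concrete mechanism your proposal is missing: to a full rook placement $R$ one attaches, at every vertex $v$ of the right/up border of $\la$, the statistic $I_R(v)$, the length of the longest increasing chain of $1$'s weakly below and to the left of~$v$. One then proves that a $231$-avoiding placement is uniquely determined by the border sequence $I(R)$, that the achievable sequences are characterised by explicit ``$231$-conditions,'' that the analogous statements hold on the $312$ side, and finally that the substitution $I_{R_1}(v)\mapsto N_{R_1}(v)-I_{R_1}(v)+1$ (and $0\mapsto 0$), where $N_{R_1}(v)$ counts all $1$'s below and to the left of $v$, carries the $231$-conditions onto the $312$-conditions, hence defines a bijection $\al$. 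If you want to salvage your inductive outline, you would need to replace the vague ``minimal local adjustment'' by an invariant of this kind that determines the placement globally and whose admissible values you can characterise on both sides; without that, the proposal does not constitute a proof.
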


In view of Proposition~\ref{prop:sW}, one sees that in
Theorems~\ref{thm:BackWeXi} and~\ref{thm:StWe} ``Wilf-equivalent"
may actually be replaced by ``shape-Wilf-equivalent."

\medskip
In \cite{JeMaAA}, Jel\'\i nek and Mansour
show that
Theorem~\ref{thm:BackWeXi} extends directly to the word setting.
The key to prove this is again shape-Wilf-equivalence, here
extended to words (see Section~\ref{sec:def} for the definition;
for the sake of the current discussion, it is only relevant to
know that {\em shape-Wilf-equivalence} of $x$ and $y$ implies
{\em strong Wilf-equivalence} of~$x$ and~$y$, the latter meaning
by definition that for all sequences $a_1,a_2,\dots,a_m$ of
positive integers
there are as many words with $a_i$ letters~$i$, $i=1,2,\dots,m$, 
and avoiding~$x$
as there are words with $a_i$ letters~$i$, $i=1,2,\dots,m$,  
and avoiding~$y$). In terms of this notion, Zeilberger's questions
translate into the questions whether $12\cdots k\oplus\beta$
and $k\cdots21\oplus\beta$ are strongly Wilf-equivalent, and
whether $231\oplus\beta$ and $312\oplus\beta$ are strongly Wilf-equivalent,
for any pattern~$\beta$.

\begin{Proposition}[{\cite[Lemma~2.1]{JeMaAA}}] \label{prop:5}
Let $x$ and $y$ be two words with letters from $\{1,2,\dots,k\}$ and
$w$ another word.
If $x$ and $y$ are shape-Wilf-equivalent for words then so
are $x\oplus w$ and $y\oplus w$.
\end{Proposition}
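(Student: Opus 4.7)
The plan is to adapt the argument of Backelin, West, and Xin (\cite[Prop.~2.3]{BaWXAA}) from the permutation category to the word-filling category. Words correspond to $0/1$-fillings of a Ferrers shape $\lambda$ having at most one $1$ per column, with prescribed row sums encoding the letter multiplicities; shape-Wilf-equivalence of $x$ and $y$ asserts that, for every Ferrers shape $\lambda$ and every choice of row- and column-sum data, the $x$-avoiding such fillings are in bijection with the $y$-avoiding ones. An occurrence of $x \oplus w$ in a filling $F$ is the combination of an $x$-occurrence and a $w$-occurrence in which every cell of the $w$-part sits strictly north-east of every cell of the $x$-part. I would therefore construct, for an arbitrary Ferrers shape $\lambda$ and arbitrary row/column data, a bijection between the $(x\oplus w)$-avoiding and the $(y\oplus w)$-avoiding word-fillings of $\lambda$.

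Given such a filling $F$, the first step is to select a canonical $w$-occurrence $R$ in $F$, if any exists---for example, the lexicographically leftmost one, obtained by greedily minimizing the leftmost column of $R$, then the smallest row of $R$, then the second column, and so on. Writing $\gamma(R)$ and $\rho(R)$ for the smallest column and smallest row of $R$, put
\[
T(R) := \{(i,j) \in \lambda : i < \rho(R) \text{ and } j < \gamma(R)\},
\]
which is the intersection of $\lambda$ with a lower-left rectangle and hence again a Ferrers shape. The second step is to observe that $F|_{T(R)}$ must avoid $x$: any $x$-occurrence in $T(R)$ would combine with $R$ to form an $x\oplus w$-occurrence in $F$.

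The third step is to invoke the hypothesised shape-Wilf-equivalence of $x$ and $y$ on the Ferrers shape $T(R)$ with the row- and column-sum data inherited from $F|_{T(R)}$. This yields a $y$-avoiding word-filling $G'$ of $T(R)$ with the same data, and setting $G|_{T(R)} := G'$ and $G|_{\lambda\setminus T(R)} := F|_{\lambda\setminus T(R)}$ produces a new word-filling $G$ of $\lambda$ with the same overall row- and column-sum data as $F$. Provided the canonical $w$-occurrence of $G$ is still $R$, the filling $G$ avoids $y \oplus w$ (since $G|_{T(R)}$ avoids $y$ and every cell north-east of $R$ is unchanged), and the same construction run with the roles of $x$ and $y$ exchanged furnishes the inverse map.

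The crux, and the main obstacle, is to verify that the passage from $F|_{T(R)}$ to $G'$ cannot create a new $w$-occurrence in $G$ that is lexicographically smaller than $R$: if $G'$ happens to harbour $w$-occurrences entirely within $T(R)$, or contains pieces that combine with cells of $\lambda \setminus T(R)$ into a $w$-occurrence leftward of $R$, the canonical choice shifts and the map is no longer well defined. If the straightforward lex-leftmost choice is not stable under such replacements, one can either refine the canonical choice of $R$ (for instance by a greedy procedure that only ever selects cells belonging to $\lambda \setminus T(R)$), or stratify the $(x\oplus w)$-avoiding fillings according to the position of the canonical $w$-occurrence and prove the bijection on each stratum separately. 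This is precisely the technical heart of the permutation argument of \cite{BaWXAA}, and the word adaptation is structurally parallel, since the separation of rows (letters $\le k$ versus letters $>k$) and of columns in an occurrence of $x\oplus w$ is exactly as in the permutation case.
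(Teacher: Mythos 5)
The paper itself gives no proof of Proposition~\ref{prop:5}: it is quoted from Jel\'\i nek and Mansour \cite{JeMaAA}, whose argument (like that of \cite{BaWXAA} for permutations) differs from yours in an essential way, and your version has a gap that is not merely the technical stability issue you flag. You reduce everything to the south-west rectangle $T(R)$ of a \emph{single} canonical $w$-occurrence $R$. But avoidance of $x\oplus w$ is a condition relative to \emph{all} $w$-occurrences simultaneously: $F$ contains $x\oplus w$ exactly when \emph{some} $w$-occurrence $R'$ has an $x$-occurrence strictly south-west of all of its cells, and distinct occurrences $R'$ have incomparable south-west rectangles (one may minimize the leftmost column, another the lowest row). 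Hence your step ``$G$ avoids $y\oplus w$ since $G|_{T(R)}$ avoids $y$ and the cells north-east of $R$ are unchanged'' does not follow: after the substitution inside $T(R)$, an occurrence of $y$ can appear south-west of some \emph{other} $w$-occurrence $R'$, built from changed cells of $T(R)$ together with unchanged cells of $T(R')\setminus T(R)$, so $G$ may contain $y\oplus w$ even if the canonical occurrence of $G$ is still $R$. Already for $w=1$ the problem is visible: every $1$ of the filling is a $w$-occurrence, and $x$ must be avoided south-west of \emph{each} of them, not only south-west of the lexicographically first one. So the single-rectangle reduction is false in the direction you need, and no refinement of the canonical choice repairs it.

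The actual proof works with the set $W(F)$ of \emph{all} cells of $\lambda$ having some $w$-occurrence strictly to their north-east. This set is closed under moving south-west, hence is a Ferrers subshape, and one shows that $F$ avoids $x\oplus w$ if and only if the restriction of $F$ to $W(F)$ avoids $x$ \emph{as a filling of the Ferrers shape} $W(F)$ --- here the convention that an occurrence must have its complete bounding submatrix inside the shape is indispensable (two $1$'s of $W(F)$ in the pattern of $x$ whose bounding box leaves $W(F)$ do not produce $x\oplus w$), and this is precisely why shape-Wilf-equivalence, rather than ordinary Wilf-equivalence, is the right hypothesis; your rectangular $T(R)$ never confronts this point. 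Well-definedness is then a clean lemma rather than a stratification: any $w$-occurrence north-east of a cell of $W(F)$ that meets $W(F)$ can be pushed further north-east (a cell of it lying in $W(F)$ has, by definition, a $w$-occurrence strictly north-east of it), so every cell of $W(F)$ sees a $w$-occurrence consisting of $1$'s outside $W(F)$; thus $W(F)$ is determined by the filling outside $W(F)$ and is unchanged when the inside is replaced. After deleting the empty rows and columns of $W(F)$ (needed because the hypothesis concerns fillings with exactly one $1$ per column), one substitutes the inside via the assumed equivalence and obtains the bijection. Your proposal would have to be rebuilt along these lines.
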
 

For the definition of the direct sum of words see 
the end of Section~\ref{sec:def}.

\begin{Theorem}[{\cite[proof of Theorem~13, Eq.~(4.6)]{KratCE}}] \label{thm:6}
The words $12\cdots k$ and $k\cdots21$ are shape-Wilf-equivalent
for words.
\end{Theorem}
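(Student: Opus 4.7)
The plan is to apply Fomin's growth-diagram machinery, as set up for 0-1 fillings of Ferrers shapes in the second author's earlier paper [KratCE]. Fix a Ferrers shape $\lambda$ and a content vector $(a_1,\ldots,a_m)$ whose entries sum to the number of columns of $\lambda$. A \emph{word-filling} of $\lambda$ with this content is a 0-1 filling of $\lambda$ having exactly one 1 in each column and exactly $a_i$ ones in the $i$th row (counted from the bottom). Shape-Wilf-equivalence for words asserts that for every such $\lambda$ and every content vector, the number of word-fillings avoiding $x$ equals the number avoiding $y$.

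First I would attach, via Fomin's local growth rules, a partition-labelling of every inner corner of $\lambda$, empty along the left and bottom boundaries, and denote by $\mu(F)$ the partition at the upper-right corner. A Greene-type theorem in the 0-1 setting (see [KratCE, Sec.~3]) gives two complementary shape interpretations of pattern avoidance: using the \emph{ordinary} local rules, $F$ avoids $12\cdots k$ if and only if $\mu(F)$ has at most $k-1$ columns; using the \emph{dual} local rules, $F$ avoids $k\cdots 21$ if and only if $\mu(F)$ has at most $k-1$ rows. The ordinary-versus-dual distinction is precisely what implements the strict-versus-weak monotonicity needed to read off strictly-monotone subsequences in both directions of the word setting.

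The second step is to construct the bijection. One exhibits a content-preserving involution $F\mapsto F^{\ast}$ on word-fillings of $\lambda$, obtained by switching between the ordinary and the dual local rules while correspondingly conjugating all the partition labels of the growth diagram, satisfying $\mu(F^{\ast})=\mu(F)'$. Because conjugation of a partition swaps its row-count and its column-count, this involution exchanges the two pattern-avoidance conditions and thereby yields the required bijection, proving shape-Wilf-equivalence.

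The hard part will be verifying the content-preservation of $F\mapsto F^{\ast}$, since without it one would only obtain a coarser equinumeration rather than genuine shape-Wilf-equivalence for words. The content $(a_1,\ldots,a_m)$ can be recovered from the horizontal increments of the partition labels along the top boundary of the growth diagram, and its invariance under the ordinary/dual swap is precisely the assertion established in [KratCE, Eq.~(4.6)], which is therefore the key technical point on which the whole argument rests.
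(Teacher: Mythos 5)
The paper does not actually prove this statement---it is imported verbatim from \cite[proof of Theorem~13, Eq.~(4.6)]{KratCE}---and your sketch follows the same growth-diagram route as that source: encode a filling by partition labels via Fomin-type local rules, translate avoidance of $12\cdots k$ resp.\ $k\cdots21$ into bounds on the number of columns resp.\ rows of the labels by a Greene-type theorem, and conjugate. As written, however, there are two genuine gaps. First, for a non-rectangular Ferrers shape there is no single ``upper-right corner'': the partition at a vertex $v$ of the right/up border only controls chains inside the rectangle below and to the left of $v$, and an occurrence of a pattern is precisely a chain whose bounding rectangle lies inside $\lambda$, i.e.\ below and to the left of some border vertex. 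Hence the avoidance conditions must be imposed at \emph{every} vertex along the right/up border, not on one partition $\mu(F)$; your stated equivalence ``$F$ avoids $12\cdots k$ iff $\mu(F)$ has at most $k-1$ columns'' is false for general $\lambda$, which is exactly the case shape-Wilf-equivalence is about. (The bijection can still be run because you conjugate \emph{all} labels, but the exchange of avoidance conditions has to be argued corner by corner, and this is where the Ferrers-shape hypothesis enters.)

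Second, the content bookkeeping---the actual heart of the proof---is misplaced and then outsourced. The one-$1$-per-column condition makes the labels grow by a single box across each horizontal step of the border (top of a column), while the row content $a_i$ is encoded in the jumps across the \emph{vertical} steps of the border (right end of row~$i$): these jumps are horizontal strips of size $a_i$ for one variant of the local rules and vertical strips of size $a_i$ for the other, not ``horizontal increments along the top boundary.'' The substance of the argument is to verify that, in the presence of repeated letters in a row, the two variants really do control the \emph{strict} increasing resp.\ \emph{strict} decreasing chains and produce exactly these complementary strip conditions, so that conjugation interchanges them and the composite map preserves $(a_1,a_2,\dots)$ and the one-$1$-per-column property. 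You defer precisely this verification to \cite[Eq.~(4.6)]{KratCE}, which is the very equation the theorem is cited from, so as a self-contained proof your argument assumes its own crux. (A minor point: the composite ``one set of rules, conjugate, invert the other set of rules'' is a bijection between the two avoidance classes; there is no reason it should be an involution, nor is that needed.)
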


If these two results are combined, the answer to Zeilberger's first question
is obtained.

\begin{Corollary}[{\cite[Fact~2.2]{JeMaAA}}] \label{cor:7}
For all words $w$, the words $12\cdots k\oplus w$ 
and $k\cdots21\oplus w$ are shape-Wilf-equivalent for words.
\end{Corollary}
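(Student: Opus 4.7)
The plan is to obtain this corollary as a direct combination of the two ingredients stated just above, namely Proposition~\ref{prop:5} and Theorem~\ref{thm:6}. Since the conclusion we need is precisely what Proposition~\ref{prop:5} produces when fed a base pair of shape-Wilf-equivalent words, no further combinatorial argument beyond citing those two facts should be necessary.

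Concretely, I would proceed as follows. First, I would invoke Theorem~\ref{thm:6} with the given integer~$k$ to conclude that the words $12\cdots k$ and $k\cdots21$ are shape-Wilf-equivalent for words. Both of these are words with letters drawn from $\{1,2,\dots,k\}$, so they satisfy the hypothesis on $x$ and $y$ in Proposition~\ref{prop:5}. Second, I would apply Proposition~\ref{prop:5} with the choice $x=12\cdots k$, $y=k\cdots21$, and with $w$ equal to the arbitrary word from the statement of the corollary. The proposition then immediately yields that $12\cdots k\oplus w$ and $k\cdots21\oplus w$ are shape-Wilf-equivalent for words, which is exactly the assertion of the corollary.

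There is essentially no obstacle in this argument, as all substantive work has been done in the two previous statements: Theorem~\ref{thm:6} supplies the base case (whose nontrivial ingredient is a growth-diagram/RSK-type bijection extending the permutation argument of~\cite{BaWXAA}), and Proposition~\ref{prop:5} supplies the propagation step $(x,y)\rightsquigarrow(x\oplus w,y\oplus w)$. The only thing one has to verify is that the letter-range hypothesis in Proposition~\ref{prop:5} is met, which is evident. As a byproduct, since shape-Wilf-equivalence for words implies strong Wilf-equivalence for words, this also settles the first of Zeilberger's two questions, namely that for every word~$w$ and every choice of multiplicities $a_1,a_2,\dots,a_m$ the number of words with $a_i$ letters~$i$ avoiding $12\cdots k\oplus w$ equals the corresponding number for $k\cdots21\oplus w$.
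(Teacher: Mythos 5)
Your proposal is correct and is exactly the argument the paper intends: the corollary is obtained by combining Theorem~\ref{thm:6} (the base shape-Wilf-equivalence of $12\cdots k$ and $k\cdots21$ for words) with Proposition~\ref{prop:5} applied to $x=12\cdots k$, $y=k\cdots21$ and the given word~$w$. No further argument is needed, matching the paper's own treatment.
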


The situation is different for Theorems~\ref{thm:StWe} and~\ref{thm:231-312}.
The counterexamples in Section~\ref{sec:counter} demonstrate that
Theorem~\ref{thm:231-312} does not straightforwardly extend to the
word setting, that is, 
that $231$ and $312$ are {\it not shape}-Wilf-equivalent for words.
Moreover, our computer calculations strongly indicate that
$231\oplus\beta$ and $312\oplus\beta$ are never Wilf-equivalent
for words
when $\beta$ is a non-empty permutation, regardless of the
exact notion of ``Wilf-equivalence" that we consider (see
Conjecture~\ref{conj:2}).
On the other hand, it seems more difficult to avoid $312$ than $231$.
This vague statement is made more precise in Conjecture~\ref{conj:1}.

Nevertheless, weak versions of Theorems~\ref{thm:StWe} and \ref{thm:231-312}
hold true. 
If we do not insist on the ``strict" patterns
$231$ and $312$ but instead, say, interpret an occurrence of the patterns
$231$ and $312$ to mean that the `$2$' and the `$3$' could also ``be equal", then it is possible
to extend Theorems~\ref{thm:StWe} and~\ref{thm:231-312}. 
Theorem~\ref{lem:11} in Section~\ref{sec:231-312} 
says that the {\it sets of patterns} $\{231,221\}$ and $\{312,212\}$ are
shape-Wilf-equivalent for words, and 
Theorem~\ref{lem:12} in the same section 
says that $\{231,121\}$ and $\{312,211\}$ are
shape-Wilf-equivalent for words. 

Furthermore, although shape-Wilf-equivalence of $231$ and
$312$ for words is not true,
the (strong) Wilf-equivalence of $231$ and $312$ for words
(and, actually, of all permutation patterns of length~$3$)
is again known and follows as a special case from another result
of Jel\'\i nek and Mansour.

\begin{Theorem}[{\cite[Lemma~2.4]{JeMaAA}}]
For any $k$, all the patterns that consist of a single letter `$1$', 
a single letter
`$3$' and $k - 2$ letters `$2$' are strongly Wilf-equivalent.
\end{Theorem}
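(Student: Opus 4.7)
The strategy is to reduce the theorem to strong Wilf-equivalences between patterns that differ by a single adjacent transposition of letters. Reversing a word is a letter-count-preserving bijection sending occurrences of a pattern $p$ to occurrences of its reverse $p^R$, so $p$ and $p^R$ are strongly Wilf-equivalent for words; this lets us restrict to patterns in which the letter `$1$' lies to the left of the letter `$3$'. Among these patterns, the adjacent transpositions of type~(a), in which a `$1$' is swapped with an adjacent `$2$', and of type~(b), in which a `$3$' is swapped with an adjacent `$2$', suffice to connect any two. The theorem therefore reduces to proving strong Wilf-equivalence for each single adjacent swap of type~(a) or~(b).

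For such a pair of patterns $(p, p')$, my plan is to construct an explicit letter-count-preserving bijection between words avoiding $p$ and words avoiding $p'$. The two patterns agree at $k-2$ positions and differ only in the relative values at two consecutive positions $i, i+1$: an occurrence of $p$ at indices $j_1 < \cdots < j_k$ in a word $w$ requires the inequality $w_{j_i} < w_{j_{i+1}}$ at those two indices, whereas the analogous occurrence of $p'$ requires the opposite inequality. The natural starting point is a canonical local swap: given $w$ avoiding $p$ but containing $p'$, identify the lexicographically earliest occurrence of $p'$ in $w$ and exchange the two letters of $w$ at its positions $j_i$ and $j_{i+1}$.

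The main obstacle is that such a naive swap typically creates or destroys further occurrences of $p$ and $p'$ elsewhere in $w$, so it is not on its own an involution. To overcome this, I would not operate on $w$ directly but re-encode $w$ as a filling of an appropriate Ferrers shape, in the spirit of the growth-diagram machinery behind Theorem~\ref{thm:6} and \cite{KratCE}, so that the adjacent-pattern swap $p\leftrightarrow p'$ corresponds to a transparent symmetry of the filling, analogous to the symmetry used to prove the shape-Wilf-equivalence of $12\cdots k$ and $k\cdots 21$. Translating that symmetry back to a bijection on words, and verifying that it interchanges $p$-avoidance and $p'$-avoidance, is the delicate part of the argument; case~(a) is likely the most technical, since the value being swapped (the `$1$') is the unique minimum of the pattern and must be tracked carefully through the encoding.
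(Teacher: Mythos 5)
Your reductions are sound but carry essentially none of the weight of the theorem. Reversal does give strong Wilf-equivalence of a pattern and its reverse, and the connectivity of the ``$1$-before-$3$'' patterns under your two kinds of adjacent swaps is clear (move the `$1$' to the far left and the `$3$' to the far right, passing only `$2$'s). But after these easy steps, what remains to be shown --- that swapping a `$1$' or a `$3$' with an adjacent `$2$' leaves the number of avoiding words unchanged for \emph{every} multiplicity vector $(a_1,a_2,\dots)$ --- is the entire content of the theorem, and for it you give no argument. You concede yourself that the local swap at the earliest occurrence is not a bijection, and the proposed repair --- re-encoding the word as a filling of a Ferrers shape in which the swap becomes a ``transparent symmetry \dots analogous to the symmetry used for $12\cdots k$ and $k\cdots 21$'' --- is neither constructed nor plausible as stated: the growth-diagram symmetry behind Theorem~\ref{thm:6} exchanges increasing with decreasing chains and says nothing about swapping one letter inside a mixed pattern such as $2\,1\,2\cdots 2\,3$ versus $1\,2\,2\cdots 2\,3$. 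Moreover, the computations in Section~\ref{sec:counter} show that for words such equivalences can genuinely fail at the level of fillings of Ferrers shapes: $231$ and $312$, which belong to the very family of patterns in the statement (the case $k=3$), are \emph{not} shape-Wilf-equivalent for words. So no shape-level symmetry can be the mechanism across the whole family; any filling-based argument would have to be carried out and verified at the level of rectangles (i.e., strong Wilf-equivalence only), and none is given.

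Note also that the paper itself does not prove this statement; it quotes it from Jel\'{\i}nek and Mansour \cite{JeMaAA} (their Lemma~2.4), so there is no in-paper machinery your sketch could be leaning on implicitly. To turn your outline into a proof you would have to supply, for each adjacent swap, an explicit multiplicity-preserving bijection (or an equivalent counting argument) between the two avoidance classes --- and that missing step is exactly the substance of the cited lemma.
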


Before describing our results, 
we need to collect definitions and notation in the next section.

\section{Definitions and notation}
\label{sec:def}

We recall from the introduction that $S_n(\si)$ denotes
the set of all permutations of $\{1,2,\dots,n\}$ that avoid~$\si$.
We choose a similar notation in the word setting.
We write $W^{(a_1,a_2,\dots)}(x)$ for the set of all
words consisting of $a_i$ letters~$i$, $i=1,2,\dots$, and avoiding~$x$.
More generally, given a set $\Om$ of words, we write 
$W^{(a_1,a_2,\dots)}(\Om)$ for the set of all
words consisting of $a_i$ letters~$i$, $i=1,2,\dots$, and avoiding
all~$x\in\Om$. (For convenience, the set braces may be omitted in this
notation.)
Occasionally, we shall use the symbol $W_{n,m}(x)$ to
denote the set of {\it all\/} words of length~$n$ that 
use letters from $\{1,2,\dots,m\}$ and avoid the word~$x$.

Next we reformulate pattern avoidance of permutations
in terms of matrices. Clearly, any permutation $\pi=\pi_1\pi_2\cdots
\pi_n$ can be represented by the corresponding {\it permutation
  matrix} $M(\pi)$, which for us is the
$n\times n$ matrix which contains a $1$ in
column~$j$ and row $\pi_j$, $j=1,2,\dots,n$, and all other entries
are zero. When we display a
matrix, our convention is that rows are ordered from bottom to
top, so that row~1 is the lowest row and row~$n$ the top-most.
For example, the left of Figure~\ref{fig:0} shows the matrix
corresponding to the permutation $352164$.
Alternatively, we may represent an $n\times n$ permutation matrix as a
$0$-$1$-filling of a square arrangement of cells, with $n$ cells in
each row and in each column, such that each row and each column
contain exactly one~$1$ and otherwise $0$'s. In the interest of
better readability, instead of $1$'s we write $X$'s, and we suppress
the $0$'s, see the right of Figure~\ref{fig:0}.

\begin{figure}[h]
$$M(352164)=\begin{pmatrix} 
0&0&0&0&1&0\\
0&1&0&0&0&0\\
0&0&0&0&0&1\\
1&0&0&0&0&0\\
0&0&1&0&0&0\\
0&0&0&1&0&0
\end{pmatrix}
\hbox{\hskip3cm}
\Einheit.2cm
\PfadDicke{.5pt}
\Pfad(0,9),111111111111111111\endPfad
\Pfad(0,6),111111111111111111\endPfad
\Pfad(0,3),111111111111111111\endPfad
\Pfad(0,0),111111111111111111\endPfad
\Pfad(0,-3),111111111111111111\endPfad
\Pfad(0,-6),111111111111111111\endPfad
\Pfad(0,-9),111111111111111111\endPfad
\Pfad(18,-9),222222222222222222\endPfad
\Pfad(15,-9),222222222222222222\endPfad
\Pfad(12,-9),222222222222222222\endPfad
\Pfad(9,-9),222222222222222222\endPfad
\Pfad(6,-9),222222222222222222\endPfad
\Pfad(3,-9),222222222222222222\endPfad
\Pfad(0,-9),222222222222222222\endPfad
\Label\ro{\text {\small$X$}}(1,-2)
\Label\ro{\text {\small$X$}}(4,4)
\Label\ro{\text {\small$X$}}(7,-5)
\Label\ro{\text {\small$X$}}(10,-8)
\Label\ro{\text {\small$X$}}(13,7)
\Label\ro{\text {\small$X$}}(16,1)
\hskip3cm
$$
\caption{Matrix representation of $352164$}
\label{fig:0}
\end{figure}

Clearly, a permutation $\pi$ avoids the permutation~$\si$ if and only
if $M(\pi)$ does not contain $M(\si)$ as a submatrix.
For example, our permutation $352164$ avoids $1324$ but does not
avoid $231$.


Pattern avoidance in words can also be formulated in terms of
matrices. 
We may represent a word $w=w_1w_2\cdots $ 
with $a_1$ letters~1, $a_2$ letters~2, $\dots$,
$a_m$ letters~$m$ as a $0$-$1$-filling of 
an $m\times(a_1+a_2+\cdots+a_m)$ rectangle, 
by placing a $1$ into the $j$-th column and $w_j$-th row,
$j=1,2,\dots,a_1+a_2+\cdots+a_m$, all other entries being $0$.
We denote this $0$-$1$-filling by $M(w)$.
For example, the word $213314242$ is represented by the filling of
Figure~\ref{fig:1}.

\begin{figure}[h]
$$
\Einheit.2cm
\PfadDicke{.5pt}
\Pfad(0,12),111111111111111111111111111\endPfad
\Pfad(0,9),111111111111111111111111111\endPfad
\Pfad(0,6),111111111111111111111111111\endPfad
\Pfad(0,3),111111111111111111111111111\endPfad
\Pfad(0,0),111111111111111111111111111\endPfad
\Pfad(27,0),222222222222\endPfad
\Pfad(24,0),222222222222\endPfad
\Pfad(21,0),222222222222\endPfad
\Pfad(18,0),222222222222\endPfad
\Pfad(15,0),222222222222\endPfad
\Pfad(12,0),222222222222\endPfad
\Pfad(9,0),222222222222\endPfad
\Pfad(6,0),222222222222\endPfad
\Pfad(3,0),222222222222\endPfad
\Pfad(0,0),222222222222\endPfad
\Label\ro{\text {\small$X$}}(1,4)
\Label\ro{\text {\small$X$}}(4,1)
\Label\ro{\text {\small$X$}}(7,7)
\Label\ro{\text {\small$X$}}(10,7)
\Label\ro{\text {\small$X$}}(13,1)
\Label\ro{\text {\small$X$}}(16,10)
\Label\ro{\text {\small$X$}}(19,4)
\Label\ro{\text {\small$X$}}(22,10)
\Label\ro{\text {\small$X$}}(25,4)
\hskip6cm
$$
\caption{The $0$-$1$-filling corresponding to $213314242$}
\label{fig:1}
\end{figure}

Similarly as for permutations, 
a word $w$ avoids the word~$x$ if and only
if $M(w)$ does not contain $M(x)$ as a submatrix.
For example, our word $213314242$ avoids $3112$ but does not
avoid $123$. 

We are now in the position to define (strong) Wilf-equivalence and
shape-Wilf-equivalence for words.
Let $\la$ be a Ferrers shape, which is a left-justified arrangement of
cells with the property that the row-lengths are non-increasing from
bottom to top. (That is, we use the French convention when we
represent Ferrers shapes.) As usual, we encode $\la$ in terms of
$(\la_1,\la_2,\dots)$ where $\la_i$ is the length of the $i$-th row
of $\la$ (counted from bottom to top).
We write $W^{(a_1,a_2,\dots)}_\la(x)$ for the set of all
$0$-$1$-fillings with {\it exactly one}~$1$ in each column, with
$a_i$ $1$'s in row~$i$, $i=1,2,\dots$, and which avoid~$x$. 
Here, for a $0$-$1$-filling~$F$ of the Ferrers shape~$\la$ 
to avoid~$x$ means that there do not exist rows $r_1,r_2,\dots$
and columns $c_1,c_2,\dots$ such that the entries of~$F$ corresponding
to these rows and columns form a matrix which is identical with $M(x)$.
Phrased differently, the important point is that the {\it complete}
matrix $M(x)$ is found as a submatrix in the filling.
Figure~\ref{fig:2} shows a $0$-$1$-filling of
shape $(10,10,10,7,4,4)$ with exactly one~$1$ in each column.
It avoids for example the pattern $4312$. 

\begin{figure}[h]
$$
\Einheit.2cm
\PfadDicke{.5pt}
\Pfad(0,18),111111111111\endPfad
\Pfad(0,15),111111111111\endPfad
\Pfad(0,12),111111111111111111111\endPfad
\Pfad(0,9),111111111111111111111111111111\endPfad
\Pfad(0,6),111111111111111111111111111111\endPfad
\Pfad(0,3),111111111111111111111111111111\endPfad
\Pfad(0,0),111111111111111111111111111111\endPfad
\Pfad(30,0),222222222\endPfad
\Pfad(27,0),222222222\endPfad
\Pfad(24,0),222222222\endPfad
\Pfad(21,0),222222222222\endPfad
\Pfad(18,0),222222222222\endPfad
\Pfad(15,0),222222222222\endPfad
\Pfad(12,0),222222222222222222\endPfad
\Pfad(9,0),222222222222222222\endPfad
\Pfad(6,0),222222222222222222\endPfad
\Pfad(3,0),222222222222222222\endPfad
\Pfad(0,0),222222222222222222\endPfad
\Label\ro{\text {\small$X$}}(1,13)
\Label\ro{\text {\small$X$}}(4,7)
\Label\ro{\text {\small$X$}}(7,1)
\Label\ro{\text {\small$X$}}(10,16)
\Label\ro{\text {\small$X$}}(13,10)
\Label\ro{\text {\small$X$}}(16,4)
\Label\ro{\text {\small$X$}}(19,7)
\Label\ro{\text {\small$X$}}(22,1)
\Label\ro{\text {\small$X$}}(25,7)
\Label\ro{\text {\small$X$}}(28,4)
\hskip7cm
$$
\caption{A $0$-$1$-filling of shape $(10,10,10,7,4,4)$}
\label{fig:2}
\end{figure}

More generally, given a set~$\Om$ of words,
we write $W^{(a_1,a_2,\dots)}_\la(\Om)$ for the analogous set 
of $0$-$1$-fillings all of which avoid all $x\in\Om$. 

We call two words $x$ and $y$ {\it strongly Wilf-equivalent for words} if
\begin{equation} \label{eq:W-equiv} 
\vert W^{(a_1,a_2,\dots)}_{R(\mathbf a)}(x)\vert
=\vert W^{(a_1,a_2,\dots)}_{R(\mathbf a)}(y)\vert
\end{equation}
for all finite sequences $\mathbf a=a_1,a_2,\dots$ of positive integers, 
where $R(\mathbf a)$
denotes the $m\times(a_1+a_2+\cdots)$ rectangle.
Equivalently, in view of the earlier explained representations of words in 
terms of fillings, the words $x$ and $y$ are strongly Wilf-equivalent for words if and
only if the number of words consisting of exactly $a_i$ letters $i$,
$i=1,2,\dots$, and avoiding~$x$ is the same as the number of
words consisting of exactly $a_i$ letters $i$,
$i=1,2,\dots$, and avoiding~$y$, for all finite sequences
$a_1,a_2,\dots$ of positive integers. On the other hand,
``ordinary"
Wilf-equivalence of words~$x$ and~$y$ means that
$\vert W_{n,m}(x)\vert =\vert W_{n,m}(y)\vert$ for all~$n$ and~$m$.

We call two words $x$ and $y$ {\it shape-Wilf-equivalent for words} if
\begin{equation} \label{eq:sW-equiv} 
\vert W^{(a_1,a_2,\dots)}_{\la}(x)\vert
=\vert W^{(a_1,a_2,\dots)}_{\la}(y)\vert
\end{equation}
for all finite sequences $\mathbf a=a_1,a_2,\dots$ of positive integers
and all shapes $\la$.
We extend the notions of strong Wilf-equivalence and shape-Wilf-equivalence
to {\it sets} of words, say $\Om$ and $\Si$, by requiring that
\eqref{eq:W-equiv} respectively \eqref{eq:sW-equiv} hold with $x$
replaced by~$\Om$ and $y$ replaced by~$\Si$ for all finite sequences
$a_1,a_2,\dots$ of positive integers and shapes~$\la$.

If we restrict the above definitions to $a_1=a_2=\dots=1$, then
they reduce to Wilf-equivalence and shape-Wilf-equivalence for
permutations.

Finally, given a word $x=x_1x_2\cdots x_r$ with letters 
from $\{1,2,\dots,m\}$ and
another word $y=y_1y_2\cdots y_s$, 
we define the {\it direct sum} $x\oplus y$ by
$$x\oplus y=x_1x_2\cdots x_r(y_1+m)(y_2+m)\cdots(y_s+m).$$

\section{Counterexamples} 
\label{sec:counter}

We have done calculations by computer for the word patterns $231$ and
$312$. Consider the shapes
$\la_1=(5,5,4)$, $\la_2=(5,5,5,4)$, and $\la_3=(6,6,6,4)$. 
Table~\ref{table:1} presents the results of our computations for all
possible sequences $\mathbf a=a_1,a_2,\dots$ of positive integers. 
(Recall that, since
our $0$-$1$-fillings always have exactly one~$1$ in each column, the sum
of the $a_i$'s must equal the length of the longest row of the shape.
Thus, for $\la_1$ and $\la_2$ the sum of the $a_i$'s must be~$5$,
whereas for $\la_3$ the sum of the $a_i$'s must be~$6$.)
 
\begin{table}[h]
\footnotesize
\begin{tabular}{llllllllllll}
\hline
\hline
$\mathbf a$ & $(221)$ & $(212)$ & $(122)$ & $(311)$ & $(131)$ & $(113)$ \\
\hline
$\vert W_{\la_1}^{\mathbf a}(231)\vert$ & $18$ & $15$ & $15$ & $13$ & $13$ & $8$\\
\hline
$\vert W_{\la_1}^{\mathbf a}(312)\vert$ & $18$ & $15$ & $15$ & $13$ & $13$ & $8$\\
\hline
\\
\hline
\hline
${\mathbf a}$ & $(2111)$ & $(1211)$ & $(1121)$ & $(1112)$   \\
\hline
$\vert W_{\la_2}^{\mathbf a}(231)\vert$ & $25$ & $26$ & $25$ & $21$  \\
\hline
$\vert W_{\la_2}^{\mathbf a}(312)\vert$ & $25$ & $25$ & $26$ & $21$ \\
\hline
\\
\hline
\hline
${\mathbf a}$ & $(1113)$ & $(3111)$ & $(1311)$ & $(1131)$ & $(2112)$ &
$(1212)$ &  $(1122)$ & $(2211)$ & $(2121)$ & $(1221)$ \\ 
\hline
$\vert W_{\la_3}^{\mathbf a}(231)\vert$ & $20$ & $42$ & $40$ & $42$ & $42$ &
$39$ & $42$ & $52$ & $54$ & $52$ \\ 
\hline
$\vert W_{\la_3}^{\mathbf a}(312)\vert$ & $20$ & $42$ & $42$ & $42$ & $42$ &
$42$ & $39$ & $54$ & $53$ & $53$ \\ 
\hline
\end{tabular}
\vskip10pt
\caption{}
\label{table:1}
\end{table}

We make the following observations. 

\begin{enumerate}
\item For $\la_1$ and arbitrary sequences $\mathbf a=a_1,a_2,a_3$
of positive integers,
the sets $W_{\la_1}^{(a_1,a_2,a_3)}(231)$ and
$W_{\la_1}^{(a_1,a_2,a_3)}(312)$ have the same cardinality. 
\item For $\la_2$, although the total numbers of $0$-$1$-fillings
with exactly one~$1$ in each column and at least one~$1$ in each
row
that avoid $231$ respectively $312$ are the same,
there exist positive integers $a_1,a_2,a_3,a_4$ for which
$\vert W_{\la_2}^{(a_1,a_2,a_3,a_4)}(231)\vert$ is different from $\vert
W_{\la_2}^{(a_1,a_2,a_3,a_4)}(312)\vert$. 
\item For $\la_3$, even the total numbers do not match.
Instead, as one lets $a_1,a_2,a_3,a_4$ grow,
one finds more and more sequences of positive integers $a_1,a_2,a_3,a_4$ 
for which $\vert W_{\la_3}^{(a_1,a_2,a_3,a_4)}(231)\vert$ is different
from $\vert
W_{\la_3}^{(a_1,a_2,a_3,a_4)}(312)\vert$. 
\end{enumerate}

This establishes the following fact.

\begin{Fact} \label{thm:9} 
The patterns $231$ and $312$ are not
shape-Wilf-equivalent for words. 
\end{Fact}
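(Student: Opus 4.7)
The claim that $231$ and $312$ are not shape-Wilf-equivalent for words is a negation, so to prove it I only need to exhibit a single shape~$\la$ and a single composition $\mathbf a=(a_1,a_2,\dots)$ of positive integers with $\vert W_\la^{\mathbf a}(231)\vert\ne\vert W_\la^{\mathbf a}(312)\vert$. Setting $a_1=a_2=\cdots=1$ would reduce the statement to ordinary shape-Wilf-equivalence of permutations, which holds by Theorem~\ref{thm:231-312}, so any counterexample must have at least one $a_i\ge 2$; and in view of observation~(1), the shape $\la_1=(5,5,4)$ already fails to separate the two patterns, so some additional room must be added to the shape as well.

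My plan is therefore to take $\la=\la_2=(5,5,5,4)$ together with $\mathbf a=(1,2,1,1)$, and to assert the two values $\vert W_{\la_2}^{(1,2,1,1)}(231)\vert=26$ and $\vert W_{\la_2}^{(1,2,1,1)}(312)\vert=25$ from Table~\ref{table:1}. Their inequality $26\ne 25$ is exactly the Fact. The computation behind each of these two cardinalities is a finite case analysis: I would condition on the three columns in which the singleton letters~$1$, $3$, and~$4$ appear (rows~$1$ and~$3$ have length~$5$, row~$4$ has length~$4$, giving at most $5\cdot 5\cdot 4=100$ such triples), fill the remaining two columns with letter~$2$, and for each resulting $0$-$1$-filling check whether any of the $\binom{5}{3}=10$ triples of columns yields a submatrix equal to $M(231)$ respectively $M(312)$.

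The main obstacle is bookkeeping rather than combinatorics: each individual case is trivial, but performing the enumeration by hand for both patterns is laborious and error-prone. I would therefore run the analysis on a computer and reproduce Table~\ref{table:1} in its entirety; the further entries of the table on which the two counts disagree --- in particular those witnessing observations~(2) and~(3) --- then serve as independent sanity checks and strengthen the counterexample further. There is no apparent conceptual reason for the discrepancy: no growth-diagram manipulation and no complement/reverse symmetry seems to localise the obstruction, which is why a computational verification appears to be the honest endpoint of the argument at this level of generality.
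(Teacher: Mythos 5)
Your proposal is correct and is essentially the paper's own argument: the Fact is established in Section~\ref{sec:counter} precisely by the computer-verified counterexamples of Table~\ref{table:1}, e.g.\ $\vert W_{\la_2}^{(1,2,1,1)}(231)\vert=26\ne 25=\vert W_{\la_2}^{(1,2,1,1)}(312)\vert$ for $\la_2=(5,5,5,4)$, which is exactly the instance you single out. Your finite case-analysis scheme for verifying these two cardinalities is a reasonable way to organise the computation and matches the paper's (purely computational) justification.
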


Consequently, Proposition~\ref{prop:5} cannot be applied to conclude
that $231\oplus\beta$ and $312\oplus\beta$ are (strongly) Wilf-equivalent
for words for arbitrary patterns~$\beta$.
However, the latter might still be true, at least for some patterns~$\beta$.
In order to clarify that point as well, we made more computer
calculations.

First, we looked at the case where $\beta=1$, that is,
at words (and, more generally, $0$-$1$-fillings) avoiding $2314$ 
respectively $3124$.
We found that 
$$\vert
W_{7,5}(2314)\vert=67853\neq67854=\vert W_{7,5}(3124)\vert,$$ 
and,
with $\la_4=(7,7,7,7,7)$, the finer enumerations
$$\vert W_{\la_4}^{(1,2,1,2,1)}(2314)\vert=908\neq909=\vert
W_{\la_4}^{(1,2,1,2,1)}(3124)\vert.$$ 
See Table~\ref{table:2} for more
details.

\begin{table}[h]\tiny
\begin{tabular}{llllllllllllll}

\hline
\hline
${\mathbf a}$ & $(31111)$ & $(13111)$ & $(11311)$ & $(11131)$ & $(11113)$  \\
\hline
$\vert W_{\la_4}^{\mathbf a}(2314)\vert$ & $640$ & $640$ & $640$ & $635$ & $640$ \\
\hline
$\vert W_{\la_4}^{\mathbf a}(3124)\vert$ & $640$ & $640$ & $640$ & $635$ & $640$ \\
\hline
\\
\hline
\hline
${\mathbf a}$ & $(22111)$ & $(21211)$ & $(21121)$ & $(21112)$ & $(12211)$ & $(12121)$ &  $(12112)$ & $(11221)$ & $(11212)$ & $(11122)$ \\
\hline
$\vert W_{\la_4}^{\mathbf a}(2314)\vert$ & $913$ & $913$ & $909$ & $913$ & $913$ & $908$ & $913$ & $909$ & $913$ & $909$ \\
\hline
$\vert W_{\la_4}^{\mathbf a}(3124)\vert$ & $913$ & $913$ & $909$ & $913$ & $913$ & $909$ & $913$ & $909$ & $913$ & $909$ \\
\hline

\end{tabular}
\vskip10pt
\caption{}
\label{table:2}
\end{table}

Furthermore, we have 
$$\vert
W_{8,5}(2314)\vert=310540\neq310563=\vert W_{8,5}(3124)\vert.
$$
Table~\ref{table:3} shows that, with  $\la_5=(8,8,8,8,8)$, in fact
$$\vert W_{\la_5}^{(a_1,a_2,a_3,a_4,a_5)}(2314)\vert\neq\vert
W_{\la_5}^{(a_1,a_2,a_3,a_4,a_5)}(3124)\vert$$
for all possible sequences $(a_1,a_2,a_3,a_4,a_5)$ of positive integers. 

\begin{table}[h]\tiny
\begin{tabular}{llllllllllllll}

\hline
\hline
${\mathbf a}$ & $(13121)$ & $(12131)$ & $(11231)$ & $(22121)$ & $(12221)$ &$(12122)$ & $(11222)$  \\
\hline
$\vert W_{\la_5}^{\mathbf a}(2314)\vert$ & $2258$ & $2245$ & $2251$ & $3162$ & $3164$ & $3163$ & $3167$ \\
\hline
$\vert W_{\la_5}^{\mathbf a}(3124)\vert$ & $2263$ & $2251$ & $2250$ & $3167$ & $3167$ & $3167$ & $3168$ \\
\hline

\end{tabular}
\vskip10pt
\caption{}
\label{table:3}
\end{table}

For the case $\beta=12$, that is, for $23145$- respectively
$31245$-avoiding words ($0$-$1$-fillings), we found that
$$\vert W_{8,6}(23145)\vert=\vert
W_{8,6}(23154)\vert=1640298\neq1640299=\vert
W_{8,6}(31245)\vert=\vert
W_{8,6}(31254)\vert.$$ 
Here, the equalities follow from reversal and complementation
of patterns and Corollary~\ref{cor:7}.

\begin{table}[h]\tiny
\begin{tabular}{llllllllllllll}

\hline
\hline
$\la$ & $(6664)$ & $(88844)$ & $(99933)$ & $(99944)$ & $(775333)$ &  $(777744)$ & $(888664)$ & $(888844)$ & $(987654)$ \\
\hline
$\vert W_\la(231)\vert$ & $425$ & $4443$ & $6177$ & $13435$ & $70$ & $1012$ & $6232$ & $6160$ & $6183$  \\
\hline
$\vert W_\la(312)\vert$ & $429$ & $4443$ & $6177$ & $13435$ & $70$ & $1012$ & $6352$ & $6160$ & $6303$ \\
\hline
\\
\hline
\hline
$\la$ & $(987655)$ & $(987755)$ & $(996644)$ & $(997655)$ & $(997744)$ & $(997755)$ &  $(999663)$ & $(999755)$ & $(999944)$ \\
\hline
$\vert W_\la(231)\vert$ & $7301$ & $9133$ & $6130$ & $14602$ & $12870$ & $18266$ & $21549$ & $30517$ & $28036$ \\
\hline
$\vert W_\la(312)\vert$ & $7375$ & $9213$ & $6130$ & $14750$ & $12870$ & $18426$ & $21645$ & $31185$ & $28036$ \\
\hline

\end{tabular}
\vskip10pt
\caption{}
\label{table:4}
\end{table}

The data seem to suggest the following.

\begin{Conjecture} \label{conj:2}
For all non-empty permutations~$\beta$, the patterns
$231\oplus\beta$ and $312\oplus\beta$ are not Wilf-equivalent for words.
\end{Conjecture}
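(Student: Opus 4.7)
The plan is to establish Conjecture~\ref{conj:2} by induction on $s=\vert\beta\vert$. For the base case $s=1$, where $\beta=1$, one needs to show that for some positive integers $n,m$ one has $\vert W_{n,m}(2314)\vert\neq\vert W_{n,m}(3124)\vert$. Table~\ref{table:3} certifies this concretely at $(n,m)=(7,5)$ and $(8,5)$; a conceptual proof can be sought by partitioning these word sets according to the position of the right-most occurrence of $m$ in $w$ (the only candidate for the `$4$' in a would-be forbidden pattern), and invoking the shape-Wilf-non-equivalence of $231$ and $312$ (Fact~\ref{thm:9}) applied to the induced filling of the Ferrers-type region on the preceding positions.

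For the inductive step, fix $\beta\in S_s$ with $s\ge2$ and assume that $231\oplus\beta'$ and $312\oplus\beta'$ are not Wilf-equivalent for words for every nonempty~$\beta'$ of length smaller than~$s$. Write $\beta=\beta''\cdot\beta_s$ and use a pointing argument: decompose a word $w$ avoiding $\sigma\oplus\beta$ (for $\sigma\in\{231,312\}$) according to the smallest index $j$ such that $w_{j+1}\cdots w_n$, restricted to letters strictly exceeding $\max\{w_1,\dots,w_j\}$, contains an occurrence of $\beta$; summing over $j$ and over the position of the letter playing the role of~$\beta_s$, one should obtain a relation of the form
$$\vert W_{n,m}(231\oplus\beta)\vert-\vert W_{n,m}(312\oplus\beta)\vert
=\sum_{(n',m')}c_{n',m'}\bigl(\vert W_{n',m'}(231\oplus\beta'')\vert-\vert W_{n',m'}(312\oplus\beta'')\vert\bigr),$$
with coefficients $c_{n',m'}$ that we would like to control so that the inductive asymmetry at level $s-1$ propagates to level~$s$, possibly after enlarging~$m$. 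When $\beta''=\emptyset$, the role of the inductive hypothesis is instead played by the base case $s=1$ treated above.

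The main obstacle is that rectangular Wilf-equivalence is a strictly weaker demand than shape-Wilf-equivalence and could in principle hold through cancellations in sums over fillings of different shapes, even when the shape-refined counts differ. Hence the crucial step is to design the pointing so that the subfillings contributing to a difference all share a common shape (or at least a common \emph{kind} of shape), which prevents the signs from cancelling when one collapses to a rectangle. The data in Tables~\ref{table:2},~\ref{table:3}~and~\ref{table:4} strongly suggest that letting a single letter have multiplicity much larger than all others isolates the "$231$ versus $312$ core", and I expect the hardest part of the argument to be identifying, for each~$\beta$, the correct parametrized family of multiplicity vectors $(a_1,\dots,a_m)$ that witnesses the non-equivalence uniformly in the inductive step.
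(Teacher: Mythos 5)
The statement you are trying to prove is not a theorem of the paper: it is stated there as Conjecture~\ref{conj:2}, supported only by computational evidence (the counts $\vert W_{7,5}(2314)\vert=67853\neq 67854=\vert W_{7,5}(3124)\vert$, the refined data in Tables~\ref{table:2} and~\ref{table:3}, and the $\beta=12$ counts for $W_{8,6}$). So there is no proof in the paper to match your argument against, and your proposal, as written, does not close the gap either. What you actually establish is only the base case $\beta=1$ (and that only by citing the paper's computations); everything beyond that is a plan, not a proof.

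The concrete gaps are these. First, your ``conceptual'' route to the base case does not work as stated: Fact~\ref{thm:9} says that $231$ and $312$ fail to be shape-Wilf-equivalent for words, i.e.\ the shape- and multiplicity-refined counts differ for \emph{some} $(\la,\mathbf a)$, but when you sum the induced fillings over the positions of the right-most letter $m$ you are summing differences of refined counts over many shapes and multiplicity vectors, and these differences have both signs (Table~\ref{table:1} already shows $25$ vs.\ $26$ and $26$ vs.\ $25$ for $\la_2$), so they can cancel. Second, the inductive step rests on an unproved identity expressing $\vert W_{n,m}(231\oplus\beta)\vert-\vert W_{n,m}(312\oplus\beta)\vert$ as a linear combination of the corresponding differences for a shorter pattern; no such decomposition is derived, the coefficients $c_{n',m'}$ are not defined, and the proposed ``smallest index $j$'' splitting does not obviously partition the avoiding words, since occurrences of $\si\oplus\beta$ in a word can interleave with the prefix and involve repeated letters. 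You acknowledge the sign-cancellation obstacle yourself, but that obstacle is precisely the content of the conjecture: rectangular Wilf-equivalence is weaker than shape-Wilf-equivalence, and even the paper's Conjecture~\ref{conj:1} (a one-sided inequality for all shapes), if true, would only give $\vert W_\la(231\oplus\beta)\vert\leq\vert W_\la(312\oplus\beta)\vert$ without the strictness needed here. In short, your proposal is a reasonable research programme, but it contains no step that goes beyond the evidence already in the paper, and the statement remains open.
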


On the other hand,
based on more computational data (see for example Table~\ref{table:4}), 
we suspect that the following may be true.

\begin{Conjecture} \label{conj:1}
For every Ferrers shape $\la$, we have 
$\vert W_\la(231)\vert\leq\vert W_\la(312)\vert$,
where $W_\la(x)$ denotes the set of {\em all} $0$-$1$-fillings of
shape~$\la$ avoiding $x$ with exactly one~$1$ in each column.
\end{Conjecture}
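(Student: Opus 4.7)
The plan is to exhibit an injection $\Phi: W_\la(231)\hookrightarrow W_\la(312)$ for every Ferrers shape $\la$. First I would re-encode each filling as a word $w=r_1 r_2\cdots r_n$ of length $n=\la_1$, where $r_i$ is the row of the unique $1$ in column $i$; the shape becomes the constraint $1\le r_i\le h_i$ with $h_1\ge h_2\ge\cdots\ge h_n$ the column heights. In this language, 231-avoidance forbids triples $i<j<k$ with $r_k<r_i<r_j$, while 312-avoidance forbids triples $i<j<k$ with $r_j<r_k<r_i$.

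A first line of attack is induction on the number of columns: delete the last column and classify the admissible values in the newly vacated slot. For a natural bijection between 231-avoiding and 312-avoiding fillings of the smaller shape, one would hope that the admissible extension-set for the 231-side injects into the admissible extension-set for the 312-side. I expect this naive version to fail, because the set of admissible extensions depends delicately on the entire filling below, not on any simple invariant that survives the bijection.

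A second and more promising line uses the growth-diagram framework of \cite{KratCE}, which was the engine in the permutation case behind Theorem~\ref{thm:231-312}. One attaches a partition to every lattice point of $\la$, governed by local growth rules whose form differs between the 231 and 312 cases; the filling is recovered from the partition sequence along the boundary. I would attempt to compare the two rule-sets row-by-row and argue that, when restricted to word fillings (which relax the ``exactly one $1$ per row'' hypothesis of the permutation case), the 231-admissible boundary-sequences embed into the 312-admissible ones via a monotonicity argument on local partition growths.

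The hard part, and the reason the result is stated only as a conjecture, is precisely this relaxation. The Stankova--West bijection underlying Theorem~\ref{thm:231-312} hinges on swapping the final occurrences of particular values, an operation made unambiguous by the one-$1$-per-row restriction; for word fillings with several $1$s in the same row, the candidate swaps are ambiguous and generically destroy either injectivity or shape-preservation. Overcoming this will require either a finer local rewriting that records the multiplicity of each row-value (perhaps by sweeping top-to-bottom through the rows, promoting one row at a time from a ``$231$-avoiding stratum'' into a ``$312$-avoiding stratum''), or a genuinely non-constructive route such as a cluster-method or transfer-matrix comparison producing coefficient-wise inequalities between the generating functions. Given the apparent strictness of the inequality in Table~\ref{table:4}, one should also look for an explicit statistic that is equidistributed on both sides minus a nonnegative correction, which would simultaneously prove the conjecture and explain the excess $|W_\la(312)|-|W_\la(231)|$.
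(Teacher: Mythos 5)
What you have written is a research plan, not a proof, and it contains no step that actually establishes the inequality $\vert W_\la(231)\vert\leq\vert W_\la(312)\vert$. The statement is, in the paper, only a \emph{conjecture} supported by computational data (Table~\ref{table:4}); no proof exists there either, so there is nothing for your sketch to be measured against except its own internal completeness — and it is not complete. Your first line of attack (column-by-column induction) is abandoned by your own admission; your second (growth-diagram comparison in the spirit of \cite{KratCE}) is only described in aspirational terms: the claimed ``monotonicity argument on local partition growths'' embedding 231-admissible boundary sequences into 312-admissible ones is never formulated, let alone verified, and it is exactly the point where the one-$1$-per-row hypothesis is used in the permutation case. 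You correctly identify that the relaxation to word fillings breaks the Bloom--Saracino/Stankova--West machinery (indeed, the paper's Section~\ref{sec:counter} shows 231 and 312 are \emph{not} shape-Wilf-equivalent for words, so no bijection can exist and any injection must be genuinely asymmetric), but identifying the obstacle is not the same as overcoming it.

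Concretely, what is missing is any defined map $\Phi$, any invariant, or any generating-function inequality that could be checked. If you want to pursue this, the most promising of your own suggestions is the last one: find a statistic (or a refinement such as the sequences $I(R)$ of Section~\ref{sec:231-312}, suitably adapted to fillings with repeated row values) whose fibers on the 312 side dominate those on the 231 side cardinality-wise; the paper's Theorems~\ref{lem:11} and~\ref{lem:12} show that after symmetrizing the patterns ($\{231,221\}$ vs.\ $\{312,212\}$, etc.) equality does hold, so the conjectured excess $\vert W_\la(312)\vert-\vert W_\la(231)\vert$ should be traceable to fillings containing a $221$ but no $231$ versus those containing a $212$ but no $312$, and an injection between \emph{those} restricted classes would suffice. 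As it stands, however, the proposal proves nothing.
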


\begin{Remark}
By going through the proof of Proposition~\ref{prop:5} in \cite{JeMaAA}, one sees that
the validity of Conjecture~\ref{conj:1} would imply that
$$\vert W_\la(231\oplus\beta)\vert\leq\vert W_\la(312\oplus\beta)\vert$$ 
for all patterns~$\beta$.
\end{Remark}

\section{``Modified" shape-Wilf-equivalence of $231$ and $312$ for words}
\label{sec:231-312}

We now show that a shape-Wilf-equivalence result can be obtained if we
do not insist on the ``strictness" of the patterns $231$ and $312$.
Namely, if in these patterns we include instances where there
is equality between the letters corresponding to the `$2$' and the
`$3$' in the patterns, or alternatively if we include instances
where there
is equality between the letters corresponding to the `$1$' and the
`$2$' in the patterns, then shape-Wilf-equivalence results do hold.
The precise formulations are given in the two theorems below.

\begin{Theorem} \label{lem:11} 
The sets of patterns $\{231,221\}$ and $\{312,212\}$ are
shape-Wilf-equivalent for words. 
\end{Theorem}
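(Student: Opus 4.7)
The plan is to establish a bijection between the two filling sets by reducing to the permutation case (Theorem~\ref{thm:231-312}) via a \emph{blow-up} construction. First, I would reformulate the avoidance conditions geometrically: a 0-1-filling~$F$ avoids $\{231,221\}$ if and only if there are no columns $c_1<c_2<c_3$ with $X$'s in rows $r_1,r_2,r_3$ satisfying $r_3<r_1\le r_2$, and $F$ avoids $\{312,212\}$ if and only if no such triple satisfies $r_2<r_3\le r_1$. These are the natural ``weak'' analogues of $231$ and $312$, obtained by permitting equality between the two upper-row entries of the forbidden pattern.

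Given $F\in W_\lambda^{(\mathbf a)}(\{231,221\})$, I would blow up $\lambda$ to a larger Ferrers shape $\tilde\lambda$ by replacing each row~$i$ with $a_i$ sub-rows of the same length, and place the $a_i$ $X$'s originally in row~$i$ along an \emph{ascending staircase} within the corresponding block (the leftmost $X$ in the bottom sub-row, the next in the next sub-row, and so on). The result $\tilde F$ has exactly one $X$ in each row and column of~$\tilde\lambda$. A short case analysis on the possible positions of a forbidden triple shows that $F$ avoids $\{231,221\}$ if and only if $\tilde F$ avoids $231$; the role of the weakening by~$221$ is precisely to absorb the triples in which two of the $X$'s land in a single block of~$\tilde\lambda$. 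An entirely analogous \emph{descending-staircase} blow-up (leftmost $X$ in the top sub-row) identifies $W_\lambda^{(\mathbf a)}(\{312,212\})$ with a corresponding subset of the $312$-avoiding permutation-like fillings of~$\tilde\lambda$.

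By Theorem~\ref{thm:231-312}, the $231$-avoiding and $312$-avoiding permutation-like fillings of~$\tilde\lambda$ are in explicit bijection. The main obstacle is verifying that this Stankova--West bijection carries fillings with ascending-staircase blocks to fillings with descending-staircase blocks. To handle this, I would examine the bijection at the level of Krattenthaler's growth-diagram reformulation: since within each block the sub-rows of~$\tilde\lambda$ all share the same length, the local growth rules must act symmetrically across a block, which should force the staircase orientation to be reversed. Should this compatibility fail to be directly transparent, the fallback is to bypass the blow-up and construct growth-diagram rules \emph{natively} for the weakened patterns, assigning to each lattice corner a combinatorial object that records multiplicities together with the data needed to detect $\{231,221\}$ respectively $\{312,212\}$; a transpose-type symmetry of these rules would then yield the bijection directly, in the same spirit as Krattenthaler's treatment of $12\cdots k$ versus $k\cdots 21$ for words (Theorem~\ref{thm:6}).
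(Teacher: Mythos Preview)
Your overall architecture matches the paper's exactly: blow up each row~$i$ into $a_i$ sub-rows, place the $X$'s as an ascending staircase to identify $W_\lambda^{(\mathbf a)}(\{231,221\})$ with a subset of the $231$-avoiding full rook placements on~$\tilde\lambda$, apply the permutation-level bijection from Theorem~\ref{thm:231-312}, and shrink back. The equivalence ``$F$ avoids $\{231,221\}$ iff $\tilde F$ avoids $231$'' is also correct and is exactly what the paper uses.

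The gap is at the step you yourself flag as the main obstacle: showing that the permutation-level bijection sends ascending-staircase blocks to descending-staircase blocks. You propose to see this via ``Krattenthaler's growth-diagram reformulation'' and a symmetry of local rules, but there is no growth-diagram version of the Stankova--West/$231\!\leftrightarrow\!312$ bijection to appeal to; Krattenthaler's growth diagrams handle $12\cdots k$ versus $k\cdots21$, which is a different phenomenon. So neither your primary route nor the fallback (native growth rules for $\{231,221\}$ and $\{312,212\}$) is available off the shelf, and the symmetry heuristic (``sub-rows in a block have equal length, hence the rules act symmetrically'') does not by itself force a reversal of orientation.

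What actually works --- and what the paper does --- is to use the Bloom--Saracino description of the bijection~$\alpha$ via the border labels $I_R(v)$ and the formula $I_{R_2}(v)=N_{R_1}(v)-I_{R_1}(v)+1$. If $u_0,\dots,u_{a_i}$ are the corners on the right edge of the $i$-th band, the ascending-staircase arrangement together with $231$-avoidance gives $I_{R_1}(u_{j+1})=I_{R_1}(u_j)+1$ for $j\ge1$; since $N_{R_1}(u_{j+1})=N_{R_1}(u_j)+1$ as well, the transformed labels satisfy $I_{R_2}(u_{j+1})=I_{R_2}(u_j)$. A short contradiction argument (using that $R_2$ is $312$-avoiding) then shows that two $X$'s in adjacent sub-rows of a band cannot be increasing in~$R_2$, so each block of~$R_2$ is a descending staircase. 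This is the missing ingredient in your plan; once you replace the growth-diagram appeal by the Bloom--Saracino labels, your argument goes through and coincides with the paper's.
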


\begin{Theorem} \label{lem:12} 
The sets of patterns $\{231,121\}$ and $\{312,211\}$ are
shape-Wilf-equivalent for words. 
\end{Theorem}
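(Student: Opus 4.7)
Plan:

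The plan is to construct an explicit bijection
\[
\phi : W^{(a_1,a_2,\dots)}_\la\bigl(\{231,121\}\bigr) \longrightarrow W^{(a_1,a_2,\dots)}_\la\bigl(\{312,211\}\bigr)
\]
that preserves the row-multiplicity vector, by adapting the Stankova--West argument for Theorem~\ref{thm:231-312} to the word setting and using growth-diagram techniques as in~\cite{KratCE}. The first step is a structural reformulation. Encoding a filling $F$ by the sequence $r_1,r_2,\dots,r_n$ of row-values of its columns, one checks that $F$ avoids $\{231,121\}$ if and only if, for every $i$, the tail $r_{i+1},\dots,r_n$ splits as a (possibly empty) prefix of values $\le r_i$ followed by a suffix of values $>r_i$; dually, $F$ avoids $\{312,211\}$ if and only if, for every $k$, the head $r_1,\dots,r_{k-1}$ splits analogously with respect to $r_k$. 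Thus the two conditions are ``rightward'' and ``leftward'' mirror images of each other.

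For a rectangular shape $\la$ this mirror symmetry produces $\phi$ directly by a clean recursion: given a $\{312,211\}$-avoiding sequence, write it as $(L,H,r_n)$ where $L$ is the prefix of values $\le r_n$ and $H$ the suffix of values $>r_n$ in the head $r_1,\dots,r_{n-1}$, and set $\phi\bigl((L,H,r_n)\bigr)=\bigl(r_n,\phi(L),\phi(H)\bigr)$. An inductive check shows that the result is $\{231,121\}$-avoiding, preserves row-multiplicities, and has an analogously-defined inverse. For a general Ferrers shape, however, the naive recursion can place a $1$ outside of $\la$: already on $\la=(3,2)$ the valid filling $121$ is sent to $112$, which has $r_3=2$ although column~$3$ of $(3,2)$ has only one cell. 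To extend the bijection to arbitrary Ferrers shapes, I would install a growth diagram on $\la$, associate to each corner of each cell a combinatorial datum (a partition augmented with threshold information) that encodes the relevant splits, and define local rules at each cell that apply the naive swap when the shape permits it and a controlled detour otherwise; an involution at the level of the local rules then induces $\phi$ globally, in the spirit of the Stankova--West proof.

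The principal obstacle is designing the local rules so that they (i) enforce both avoidance conditions in terms of the corner data, (ii) respect the staircase boundary of $\la$ at every cell, and (iii) admit a compatible involution. The example $\la=(3,2)$ already indicates what is needed: there, the correct bijection swaps the first two columns of $121$ to produce $211$ rather than recursing. Deciding when the bijection should ``swap'' versus ``recurse,'' together with the new cases introduced by equalities between row-values (which stem from the patterns $121$ and $211$ and do not arise in the Stankova--West proof for permutations), will be the most delicate part of the argument.
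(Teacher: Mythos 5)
Your proposal does not yet contain a proof of the theorem: the statement asserts shape-Wilf-equivalence, i.e.\ equality of the counts for \emph{every} Ferrers shape $\la$, and the only case you actually construct a bijection for is the rectangular one. The recursive map $\phi\bigl((L,H,r_n)\bigr)=\bigl(r_n,\phi(L),\phi(H)\bigr)$ is fine for rectangles (it preserves content and the split-characterisations you state are correct there), but for general $\la$ you yourself observe that it breaks, and what you offer instead --- ``install a growth diagram on $\la$'', ``local rules'' that ``apply the naive swap when the shape permits it and a controlled detour otherwise'', an ``involution at the level of the local rules'' --- is a research plan, not an argument: you explicitly defer exactly the points (when to swap versus recurse, how to handle the equalities coming from the patterns $121$ and $211$, compatibility with the staircase boundary) on which the whole theorem hinges. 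A further caveat: your sequence-level characterisation of avoidance is only valid for rectangles, since in a Ferrers shape an occurrence of a pattern only counts when the entire submatrix $M(x)$ lies inside $\la$; any shape-level argument has to build this in from the start.

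For comparison, the paper's proof does not attempt to redo the shape-level combinatorics at all. It reduces the word statement to the known permutation statement (Theorem~\ref{thm:231-312}, in the bijective form of Bloom and Saracino \cite{BlSaAA}): blow up each row $i$ of a $\{231,121\}$-avoiding filling into a band of $a_i$ rows, placing the $a_i$ ones in \emph{decreasing} order within the band, so that the resulting full rook placement on the enlarged shape is $231$-avoiding precisely when the original filling avoids $\{231,121\}$; then apply the bijection $\al$ of \cite{BlSaAA} based on the chain-length sequences $I(R)$; then prove (via the relations \eqref{eq:IR1}--\eqref{eq:R1R2}-type argument of Theorem~\ref{lem:11}) that in the image the ones within each band are arranged \emph{increasingly}, so that shrinking the bands yields a $\{312,211\}$-avoiding filling of $\la$ with the same row multiplicities. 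That route avoids designing any new local rules on $\la$; if you want to salvage your approach, the realistic options are either to carry out this blow-up/shrink reduction, or to genuinely define and verify your local rules, which would amount to reproving a Bloom--Saracino/Stankova--West-type result from scratch.
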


Before we are able to prove these two theorems, we need to
recall the essential ingredients of the bijective proof of 
Theorem~\ref{thm:231-312} given by Bloom and Saracino in \cite{BlSaAA}.

Shape-Wilf-equivalence for permutations (as defined in
Section~\ref{sec:def}; see the paragraph after \eqref{eq:sW-equiv}) 
involves $0$-$1$-fillings with {\it exactly one~$1$ in each column},
and with {\it exactly one~$1$ in each row} (the latter coming from
the restriction $a_1=a_2=\dots=1$). We call such fillings {\it full rook
placements} from now on.

Consider a Ferrers shape $\la$ and a full rook placement $R$ on it.
For each vertex~$v$ (by which we mean a corner of a cell) 
along the right/up border of $\la$ we assign
an integer $I_R(v)$, which by definition is the length of the longest
increasing chain of $1$'s in the region to the left and below of~$v$.
The right of Figure~\ref{fig:3} shows the numbers $I_R(v)$ for the
particular full rook placement~$R$ presented there, as well does the left of
Figure~\ref{fig:4}. (At this point, the varying
thickness of lines should be ignored.)
Given a full rook placement $R$ on $\la$, we denote the sequence of
numbers $I_R(v)$, where $v$ ranges over the vertices along the right/up
border of $\la$ by $I(R)$.

Bloom and Saracino show the following:

\begin{enumerate} 
\item A full rook placement $R$ on~$\la$ that is $231$-avoiding
(as a $0$-$1$-filling) is uniquely determined by $I(R)$
(see \cite[Theorem~2]{BlSaAA}).
\item The possible sequences $I(R)$, where $R$ is a $231$-avoiding
full rook placement on~$\la$, have a simple characterisation,
by means of the so-called {\it $231$-conditions}
(see \cite{BlSaAA} for their definition).
\item A full rook placement $R$ on~$\la$ that is $312$-avoiding
(as a $0$-$1$-filling) is uniquely determined by $I(R)$
(see \cite[Theorem~2]{BlSaAA}).
\item The possible sequences $I(R)$, where $R$ is a $312$-avoiding
full rook placement on~$\la$, have a simple characterisation,
by means of the so-called {\it $312$-conditions}
(see \cite{BlSaAA} for their definition).
\item A bijection $\al$ from $231$- to $312$-avoiding full rook placements
of $\la$ can be defined as follows:
let $R_1$ be a $231$-avoiding full rook placement on~$\la$.
For each vertex~$v$ along the right/up border of $\la$ 
calculate the number 
\begin{equation} \label{eq:alpha} 
\begin{cases} 
0,&\text{if }I_{R_1}(v)=0,\\
N_{R_1}(v)-I_{R_1}(v)+1,&\text{otherwise},
\end{cases}
\end{equation}
where $N_{R}(v)$ denotes the
total number of $1$'s in a full rook placement~$R$ 
in the region to the left and below of~$v$.
This defines a new sequence of non-negative numbers.
Let $R_2$ be the uniquely determined $312$-avoiding full rook
placement corresponding to that sequence. The map $\al:R_1\to R_2$ is
a bijection.
\end{enumerate}

It should be noted that the full rook placements on the right of
Figure~\ref{fig:3} and on the left of Figure~\ref{fig:4} correspond to
each other under the bijection~$\al$.

\begin{proof}[Proof of Theorem~\ref{lem:11}] 
Let $\la$ be a Ferrers shape and $a_1,a_2,\dots$
a sequence of positive integers. 
We have to prove that
$\vert W_\la^{(a_1,a_2,\dots)}(231,221)\vert
=\vert W_\la^{(a_1,a_2,\dots)}(312,212)\vert$.
We are going to achieve this by constructing a bijection
between the sets
$W_\la^{(a_1,a_2,\dots)}(231,221)$ and
$W_\la^{(a_1,a_2,\dots)}(312,212)$. 

Let $T_1\in W_\la^{(a_1,a_2,\dots)}(231,221)$.
In general, the filling~$T_1$ is not a full rook placement since it may
contain several $1$'s in a row. However, we convert $T$ into a full
rook placement by replacing row $i$ by $a_i$ rows in the new filling,
$i=1,2,\dots$,
and rearranging the $a_i$ $1$'s in an increasing fashion from
bottom/left to top/right so that each of these~$1$'s stays in
its column but each of the $a_i$ rows contains
exactly one~$1$. For later reference, we call the region covered
by these $a_i$ rows the {\it $i$-th band}, and the obtained full
rook placement~$R_1$.
It should be noted that the original filling
is $\{231,221\}$-avoiding if and only if the new filling --- which
necessarily is a full rook placement --- is $231$-avoiding.
This construction is illustrated in
Figure~\ref{fig:3}. The left of the figure shows a filling
in $W^{(2,2,3,1,1,1)}_{(10,10,10,7,4,4)}(231,221)$.
The filling on the right shows the result of the above
described conversion.
In the figure, the separations between the original rows 
are indicated by thick lines, whereas the
newly created rows are separated by thin lines. The resulting filling
is in $W^{(1,1,\dots,1)}_{\la_1}(231)$, where 
$\la_1=(10,10,10,10,10,10,10,7,4,4)$.

\begin{figure}[h]
$$
\Einheit.2cm
\PfadDicke{.3pt}
\Pfad(0,18),111111111111\endPfad
\Pfad(0,15),111111111111\endPfad
\Pfad(0,12),111111111111111111111\endPfad
\Pfad(0,9),111111111111111111111111111111\endPfad
\Pfad(0,6),111111111111111111111111111111\endPfad
\Pfad(0,3),111111111111111111111111111111\endPfad
\Pfad(0,0),111111111111111111111111111111\endPfad
\Pfad(30,0),222222222\endPfad
\Pfad(27,0),222222222\endPfad
\Pfad(24,0),222222222\endPfad
\Pfad(21,0),222222222222\endPfad
\Pfad(18,0),222222222222\endPfad
\Pfad(15,0),222222222222\endPfad
\Pfad(12,0),222222222222222222\endPfad
\Pfad(9,0),222222222222222222\endPfad
\Pfad(6,0),222222222222222222\endPfad
\Pfad(3,0),222222222222222222\endPfad
\Pfad(0,0),222222222222222222\endPfad
\Label\ro{\text {\small$X$}}(1,1)
\Label\ro{\text {\small$X$}}(4,10)
\Label\ro{\text {\small$X$}}(7,16)
\Label\ro{\text {\small$X$}}(10,13)
\Label\ro{\text {\small$X$}}(13,4)
\Label\ro{\text {\small$X$}}(16,1)
\Label\ro{\text {\small$X$}}(19,7)
\Label\ro{\text {\small$X$}}(22,4)
\Label\ro{\text {\small$X$}}(25,7)
\Label\ro{\text {\small$X$}}(28,7)
\hbox{\hskip7cm}
\PfadDicke{.3pt}
\Pfad(30,0),222222222222222222222\endPfad
\Pfad(27,0),222222222222222222222\endPfad
\Pfad(24,0),222222222222222222222\endPfad
\Pfad(21,0),222222222222222222222222\endPfad
\Pfad(18,0),222222222222222222222222\endPfad
\Pfad(15,0),222222222222222222222222\endPfad
\Pfad(12,0),222222222222222222222222222222\endPfad
\Pfad(9,0),222222222222222222222222222222\endPfad
\Pfad(6,0),222222222222222222222222222222\endPfad
\Pfad(3,0),222222222222222222222222222222\endPfad
\Pfad(0,0),222222222222222222222222222222\endPfad
\PfadDicke{1.5pt}
\Pfad(0,30),111111111111\endPfad
\Pfad(0,27),111111111111\endPfad
\Pfad(0,24),111111111111111111111\endPfad
\Pfad(0,21),111111111111111111111111111111\endPfad
\PfadDicke{.3pt}
\Pfad(0,18),111111111111111111111111111111\endPfad
\Pfad(0,15),111111111111111111111111111111\endPfad
\PfadDicke{1.5pt}
\Pfad(0,12),111111111111111111111111111111\endPfad
\PfadDicke{.3pt}
\Pfad(0,9),111111111111111111111111111111\endPfad
\PfadDicke{1.5pt}
\Pfad(0,6),111111111111111111111111111111\endPfad
\PfadDicke{.3pt}
\Pfad(0,3),111111111111111111111111111111\endPfad
\PfadDicke{1.5pt}
\Pfad(0,0),111111111111111111111111111111\endPfad
\Label\ro{\text {\small$X$}}(1,1)
\Label\ro{\text {\small$X$}}(4,22)
\Label\ro{\text {\small$X$}}(7,28)
\Label\ro{\text {\small$X$}}(10,25)
\Label\ro{\text {\small$X$}}(13,7)
\Label\ro{\text {\small$X$}}(16,4)
\Label\ro{\text {\small$X$}}(19,13)
\Label\ro{\text {\small$X$}}(22,10)
\Label\ro{\text {\small$X$}}(25,16)
\Label\ro{\text {\small$X$}}(28,19)
\Label\ro{\text {\small$0$}}(0,31)
\Label\ro{\text {\small$1$}}(3,31)
\Label\ro{\text {\small$2$}}(6,31)
\Label\ro{\text {\small$3$}}(9,31)
\Label\ro{\text {\small$3$}}(12,31)
\Label\ro{\text {\small$3$}}(12,28)
\Label\ro{\text {\small$2$}}(12,25)
\Label\ro{\text {\small$2$}}(15,25)
\Label\ro{\text {\small$2$}}(18,25)
\Label\ro{\text {\small$3$}}(21,25)
\Label\ro{\text {\small$3$}}(21,22)
\Label\ro{\text {\small$3$}}(24,22)
\Label\ro{\text {\small$4$}}(27,22)
\Label\ro{\text {\small$5$}}(30,22)
\Label\r{\text {\small$4$}}(30,18)
\Label\r{\text {\small$3$}}(30,15)
\Label\r{\text {\small$3$}}(30,12)
\Label\r{\text {\small$2$}}(30,9)
\Label\r{\text {\small$2$}}(30,6)
\Label\r{\text {\small$1$}}(30,3)
\Label\r{\text {\small$0$}}(30,0)
\hskip5.8cm
$$
\caption{The blowup of a filling}
\label{fig:3}
\end{figure}

We now apply the bijection~$\al$ to $R_1$, to obtain a 
$312$-avoiding full rook
placement~$R_2.$
The left of Figure~\ref{fig:4} shows the result when we apply $\al$
to the full rook placement on the right of Figure~\ref{fig:3}.

\smallskip
{\sc Claim 1.}
{\em In the $i$-th band, the $1$'s in
$R_2$ are arranged in a decreasing fashion, from top/left to
bottom/right, for each~$i$.}
\smallskip

Finally, we shrink the $i$-th band back to a single row, putting a~$1$
in those columns where $R_2$ contained a~$1$. We denote the obtained
filling by~$T_2$. Since $R_2$ was $312$-avoiding, the ``compressed"
filling will be $\{312,212\}$-avoiding --- if we take the above claim
for granted. Thus, the filling~$T_2$ is in 
$W_\la^{(a_1,a_2,\dots)}(312,212)$.
Figure~\ref{fig:4} shows the above shrinking process applied to our
running example.

From the construction, it is also obvious how the inverse mapping
works. It involves however another claim.

\smallskip
{\sc Claim 2.}
{\em If the inverse of $\al$
is applied to a full rook placement with the property that, in
the $i$-th band, the $1$'s are arranged in decreasing fashion,
then we obtain a full rook placement where, in
the $i$-th band, the $1$'s are arranged in increasing fashion,
$i=1,2,\dots$.}
\smallskip

If we assume the truth of these two claims then the proof of
Theorem~\ref{lem:11} is complete.

\begin{figure}[h]
$$
\Einheit.2cm
\PfadDicke{.3pt}
\Pfad(30,0),222222222222222222222\endPfad
\Pfad(27,0),222222222222222222222\endPfad
\Pfad(24,0),222222222222222222222\endPfad
\Pfad(21,0),222222222222222222222222\endPfad
\Pfad(18,0),222222222222222222222222\endPfad
\Pfad(15,0),222222222222222222222222\endPfad
\Pfad(12,0),222222222222222222222222222222\endPfad
\Pfad(9,0),222222222222222222222222222222\endPfad
\Pfad(6,0),222222222222222222222222222222\endPfad
\Pfad(3,0),222222222222222222222222222222\endPfad
\Pfad(0,0),222222222222222222222222222222\endPfad
\PfadDicke{1.5pt}
\Pfad(0,30),111111111111\endPfad
\Pfad(0,27),111111111111\endPfad
\Pfad(0,24),111111111111111111111\endPfad
\Pfad(0,21),111111111111111111111111111111\endPfad
\PfadDicke{.3pt}
\Pfad(0,18),111111111111111111111111111111\endPfad
\Pfad(0,15),111111111111111111111111111111\endPfad
\PfadDicke{1.5pt}
\Pfad(0,12),111111111111111111111111111111\endPfad
\PfadDicke{.3pt}
\Pfad(0,9),111111111111111111111111111111\endPfad
\PfadDicke{1.5pt}
\Pfad(0,6),111111111111111111111111111111\endPfad
\PfadDicke{.3pt}
\Pfad(0,3),111111111111111111111111111111\endPfad
\PfadDicke{1.5pt}
\Pfad(0,0),111111111111111111111111111111\endPfad
\Label\ro{\text {\small$X$}}(1,25)
\Label\ro{\text {\small$X$}}(4,4)
\Label\ro{\text {\small$X$}}(7,1)
\Label\ro{\text {\small$X$}}(10,28)
\Label\ro{\text {\small$X$}}(13,10)
\Label\ro{\text {\small$X$}}(16,22)
\Label\ro{\text {\small$X$}}(19,7)
\Label\ro{\text {\small$X$}}(22,19)
\Label\ro{\text {\small$X$}}(25,16)
\Label\ro{\text {\small$X$}}(28,13)
\Label\ro{\text {\small$0$}}(0,31)
\Label\ro{\text {\small$1$}}(3,31)
\Label\ro{\text {\small$1$}}(6,31)
\Label\ro{\text {\small$1$}}(9,31)
\Label\ro{\text {\small$2$}}(12,31)
\Label\ro{\text {\small$1$}}(12,28)
\Label\ro{\text {\small$1$}}(12,25)
\Label\ro{\text {\small$2$}}(15,25)
\Label\ro{\text {\small$3$}}(18,25)
\Label\ro{\text {\small$3$}}(21,25)
\Label\ro{\text {\small$2$}}(21,22)
\Label\ro{\text {\small$3$}}(24,22)
\Label\ro{\text {\small$3$}}(27,22)
\Label\ro{\text {\small$3$}}(30,22)
\Label\r{\text {\small$3$}}(30,18)
\Label\r{\text {\small$3$}}(30,15)
\Label\r{\text {\small$2$}}(30,12)
\Label\r{\text {\small$2$}}(30,9)
\Label\r{\text {\small$1$}}(30,6)
\Label\r{\text {\small$1$}}(30,3)
\Label\r{\text {\small$0$}}(30,0)
\hbox{\hskip7cm}
\PfadDicke{.3pt}
\Pfad(0,18),111111111111\endPfad
\Pfad(0,15),111111111111\endPfad
\Pfad(0,12),111111111111111111111\endPfad
\Pfad(0,9),111111111111111111111111111111\endPfad
\Pfad(0,6),111111111111111111111111111111\endPfad
\Pfad(0,3),111111111111111111111111111111\endPfad
\Pfad(0,0),111111111111111111111111111111\endPfad
\Pfad(30,0),222222222\endPfad
\Pfad(27,0),222222222\endPfad
\Pfad(24,0),222222222\endPfad
\Pfad(21,0),222222222222\endPfad
\Pfad(18,0),222222222222\endPfad
\Pfad(15,0),222222222222\endPfad
\Pfad(12,0),222222222222222222\endPfad
\Pfad(9,0),222222222222222222\endPfad
\Pfad(6,0),222222222222222222\endPfad
\Pfad(3,0),222222222222222222\endPfad
\Pfad(0,0),222222222222222222\endPfad
\Label\ro{\text {\small$X$}}(1,13)
\Label\ro{\text {\small$X$}}(4,1)
\Label\ro{\text {\small$X$}}(7,1)
\Label\ro{\text {\small$X$}}(10,16)
\Label\ro{\text {\small$X$}}(13,4)
\Label\ro{\text {\small$X$}}(16,10)
\Label\ro{\text {\small$X$}}(19,4)
\Label\ro{\text {\small$X$}}(22,7)
\Label\ro{\text {\small$X$}}(25,7)
\Label\ro{\text {\small$X$}}(28,7)
\hskip6cm
$$
\caption{The shrinking of a full rook placement}
\label{fig:4}
\end{figure}

\medskip
We now prove Claim~1. We want to establish that, if~$R_1$
is a $231$-avoiding full rook placement with
$1$'s arranged in increasing fashion in the $i$-th band, then
$R_2=\al(R_1)$ has $1$'s arranged in decreasing fashion in the
$i$-th band, $i=1,2,\dots$. Let us concentrate on the $i$-th band 
of $R_1$. We denote the vertices along the right border of the
band by $u_0,u_1,\dots,u_{a_i}$. 
In order to follow the next arguments, it might be helpful to look
at Figure~\ref{fig:5}. The left of Figure~\ref{fig:5} is meant to be
the sketch of the $i$-th band (with $a_i=4$). The $1$'s in the full
rook placement $R_1$ are indicated by $X_1,X_2,\dots$.

\begin{figure}[h]
$$
\Einheit.2cm
\PfadDicke{.5pt}
\Pfad(0,12),111111111111111111111111111111\endPfad
\Pfad(0,9),111111111111111111111111111111\endPfad
\Pfad(0,6),111111111111111111111111111111\endPfad
\Pfad(0,3),111111111111111111111111111111\endPfad
\Pfad(0,0),111111111111111111111111111111\endPfad
\Pfad(30,0),222222222222\endPfad
\Pfad(27,0),222222222222\endPfad
\Pfad(24,0),222222222222\endPfad
\Pfad(21,0),222222222222\endPfad
\Pfad(18,0),222222222222\endPfad
\Pfad(15,0),222222222222\endPfad
\Pfad(12,0),222222222222\endPfad
\Pfad(9,0),222222222222\endPfad
\Pfad(6,0),222222222222\endPfad
\Pfad(3,0),222222222222\endPfad
\Pfad(0,0),222222222222\endPfad
\Label\ro{\text {\small$X_1$}}(10,1)
\Label\ro{\text {\small$X_2$}}(16,4)
\Label\ro{\text {\small$X_3$}}(19,7)
\Label\ro{\text {\small$X_4$}}(25,10)
\Label\r{\text {\small$u_0$}}(30,0)
\Label\r{\text {\small$u_1$}}(30,3)
\Label\r{\text {\small$u_2$}}(30,6)
\Label\r{\text {\small$u_3$}}(30,9)
\Label\r{\text {\small$u_4$}}(30,12)
\Einheit.6cm
\SPfad(0,-1),2\endSPfad
\SPfad(1,-1),2\endSPfad
\SPfad(2,-1),2\endSPfad
\SPfad(3,-1),2\endSPfad
\SPfad(4,-1),2\endSPfad
\SPfad(5,-1),2\endSPfad
\SPfad(6,-1),2\endSPfad
\SPfad(7,-1),2\endSPfad
\SPfad(8,-1),2\endSPfad
\SPfad(9,-1),2\endSPfad
\SPfad(10,-1),2\endSPfad
\hbox{\hskip8cm}
\Einheit.2cm
\PfadDicke{.5pt}
\Pfad(0,12),111111111111111111111111111111\endPfad
\Pfad(0,9),111111111111111111111111111111\endPfad
\Pfad(0,6),111111111111111111111111111111\endPfad
\Pfad(0,3),111111111111111111111111111111\endPfad
\Pfad(0,0),111111111111111111111111111111\endPfad
\Pfad(30,0),222222222222\endPfad
\Pfad(27,0),222222222222\endPfad
\Pfad(24,0),222222222222\endPfad
\Pfad(21,0),222222222222\endPfad
\Pfad(18,0),222222222222\endPfad
\Pfad(15,0),222222222222\endPfad
\Pfad(12,0),222222222222\endPfad
\Pfad(9,0),222222222222\endPfad
\Pfad(6,0),222222222222\endPfad
\Pfad(3,0),222222222222\endPfad
\Pfad(0,0),222222222222\endPfad
\Label\ro{\text {\small$Y_2$}}(10,4)
\Label\ro{\text {\small$Y_3$}}(19,7)
\Label\r{\text {\small$u_0$}}(30,0)
\Label\r{\text {\small$u_1$}}(30,3)
\Label\r{\text {\small$u_2$}}(30,6)
\Label\r{\text {\small$u_3$}}(30,9)
\Label\r{\text {\small$u_4$}}(30,12)
\Einheit.6cm
\SPfad(0,-1),2\endSPfad
\SPfad(1,-1),2\endSPfad
\SPfad(2,-1),2\endSPfad
\SPfad(3,-1),2\endSPfad
\SPfad(4,-1),2\endSPfad
\SPfad(5,-1),2\endSPfad
\SPfad(6,-1),2\endSPfad
\SPfad(7,-1),2\endSPfad
\SPfad(8,-1),2\endSPfad
\SPfad(9,-1),2\endSPfad
\SPfad(10,-1),2\endSPfad
\hskip6cm
$$
\caption{The $i$-th band}
\label{fig:5}
\end{figure}
 
We claim that
\begin{equation} \label{eq:IR1} 
I_{R_1}(u_{j+1})=I_{R_1}(u_j)+1,\quad  \text{for }j=1,2,\dots,a_i-1.
\end{equation}
Indeed, by definition, $I_{R_1}(u_1)$ is the length of the
longest increasing chain of $1$'s in the region to the left and below
of~$u_1$. Since there cannot be any $1$'s in the region to the right
and below of~$X_2$ because $R_1$ is $231$-avoiding, we must have
$I_{R_1}(u_2)=I_{R_1}(u_1)+1$. The same argument shows
$I_{R_1}(u_3)=I_{R_1}(u_2)+1$, etc., thus establishing our claim in
\eqref{eq:IR1}.

In order to apply the bijection~$\al$, we have to calculate the
numbers in \eqref{eq:alpha}, which then become lengths of increasing
chains $I_{R_2}(v)$ for some $312$-avoiding full rook placement~$R_2$. 
Because of \eqref{eq:IR1}, we have 
\begin{equation} \label{eq:R1R2}
I_{R_2}(u_{j+1})=I_{R_2}(u_j),\quad  \text{for }j=1,2,\dots,a_i-1. 
\end{equation}
Let us suppose that $R_2$ has two $1$'s in the $i$-th band which are
in increasing order. Without loss of generality, we may assume that
these two $1$'s are located in neighbouring rows.
The right of Figure~\ref{fig:5} is meant to
illustrate this situation, with $Y_2$ and $Y_3$ indicating the
positions of these two $1$'s. Since $R_2$ is $312$-avoiding,
$1$'s in the region to
the right and below of $Y_2$ must be arranged in decreasing order.
Thus, together with $Y_2$, they form a decreasing chain.
The number $I_{R_2}(u_2)$ gives the length of the longest increasing
chain in the region to the left and below of~$u_2$. Because of the
above observed arrangement of $1$'s to the right and below of $Y_2$, 
one of the chains with length $I_{R_2}(u_2)$ is one which ends 
in~$Y_2$. However, this implies that $I_{R_2}(u_3)=I_{R_2}(u_2)+1$,
in contradiction with \eqref{eq:R1R2}. Thus, $Y_2$ and $Y_3$ cannot
be arranged in increasing fashion. The same argument applies to
any two neighbouring rows in the $i$-th band. Thus, we have
proven Claim~1.

\medskip
The only remaining task for the completion of the proof
of the theorem is to verify Claim~2. This is however
completely analogous and is left to the reader.
\end{proof} 

\begin{proof}[Proof of Theorem~\ref{lem:12}] 
This can be proven in a fashion which is analogous to the proof of
  Theorem~\ref{lem:11}. The major difference is that, here,
a $\{231,121\}$-avoiding filling is converted into a $231$-avoiding
full rook placement by rearranging the $a_i$ $1$'s in the $i$-th band
in {\it decreasing} fashion, while a 
$\{312,211\}$-avoiding filling is converted into a $312$-avoiding
full rook placement by rearranging the $a_i$ $1$'s in the $i$-th band
in {\it increasing} fashion. We leave the details to the reader.
\end{proof}

In view of Proposition~\ref{prop:5} and the fact that it can easily 
be extended to {\it sets} of patterns, Theorems~\ref{lem:11}
and~\ref{lem:12} entail the following corollaries.

\begin{Corollary} \label{cor:11} 
For any word $\beta$,
the sets of patterns $\{231\oplus\beta,221\oplus\beta\}$ 
and $\{312\oplus\beta,212\oplus\beta\}$ are
shape-Wilf-equivalent for words. 
\end{Corollary}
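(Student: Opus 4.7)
The plan is to reduce Corollary~\ref{cor:11} to Theorem~\ref{lem:11} by first upgrading Proposition~\ref{prop:5} from single patterns to sets of patterns: if two sets $\Omega$ and $\Sigma$ of words with letters from $\{1,2,\dots,k\}$ are shape-Wilf-equivalent for words, then for any word $w$ the sets $\Omega\oplus w:=\{x\oplus w:x\in\Omega\}$ and $\Sigma\oplus w:=\{y\oplus w:y\in\Sigma\}$ are shape-Wilf-equivalent for words as well. Once this extension is available, applying it with $\Omega=\{231,221\}$, $\Sigma=\{312,212\}$ (shape-Wilf-equivalent by Theorem~\ref{lem:11}) and $w=\beta$ yields Corollary~\ref{cor:11} at once.

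To prove the set-valued extension I would mimic Jel\'\i nek and Mansour's argument for Proposition~\ref{prop:5}. Fix a Ferrers shape $\lambda$ and a sequence $\mathbf{a}=a_1,a_2,\dots$ of positive integers, and take $F\in W_\lambda^{\mathbf{a}}(\Omega\oplus w)$. Using a greedy rule one singles out a canonical occurrence of $w$ in $F$ (if any); the rows and columns it occupies cut out a Ferrers sub-shape $\mu\subseteq\lambda$ lying strictly to its lower-left, on which the restriction of $F$ inherits a filling with some induced row and column content. The hypothesis that $F$ avoids $x\oplus w$ for every $x\in\Omega$ forces this sub-filling to avoid every $x\in\Omega$. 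Now apply the shape-Wilf-equivalence bijection between $\Omega$ and $\Sigma$ on $\mu$, splice the transformed sub-filling back into $F$, and obtain a filling $F'\in W_\lambda^{\mathbf{a}}(\Sigma\oplus w)$ with the same canonical $w$-occurrence. The map $F\mapsto F'$ is manifestly reversible and is the desired bijection.

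The main obstacle is to specify the canonical occurrence of $w$ so that it depends only on the entries of $F$ outside $\mu$, so that it is not disturbed when one replaces the sub-filling on $\mu$ by its image under the $\Omega$-to-$\Sigma$ bijection; a sufficiently greedy selection rule (e.g., the one used by Jel\'\i nek--Mansour) accomplishes this. Once the canonical choice is pinned down, the single-to-set upgrade requires essentially a single verbatim substitution in the original argument: ``the sub-filling on $\mu$ avoids $x$'' is replaced by ``the sub-filling on $\mu$ avoids every element of $\Omega$'', which is immediate since each pattern $x\in\Omega$ is a prefix of $x\oplus w$. Reversibility, preservation of row and column contents, and the absence of spurious new occurrences of any $y\oplus w$ (any such occurrence would either confine a copy of $y$ to $\mu$, contradicting $\Sigma$-avoidance of the new sub-filling, or rely on a non-canonical $w$-occurrence and thereby also be present in the original $F$) all carry over without modification.
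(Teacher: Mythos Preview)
Your proposal is correct and follows exactly the route the paper takes: the paper derives Corollary~\ref{cor:11} in one line by observing that Proposition~\ref{prop:5} ``can easily be extended to \emph{sets} of patterns'' and then combining this extension with Theorem~\ref{lem:11}. Your write-up actually supplies more detail than the paper does, sketching how the Jel\'\i nek--Mansour argument carries over verbatim when single patterns are replaced by sets.
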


\begin{Corollary} \label{cor:12} 
For any word $\beta$,
the sets of patterns $\{231\oplus\beta,121\oplus\beta\}$ 
and $\{312\oplus\beta,211\oplus\beta\}$ are
shape-Wilf-equivalent for words. 
\end{Corollary}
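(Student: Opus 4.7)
The plan is to deduce Corollary~\ref{cor:12} as a direct consequence of Theorem~\ref{lem:12} combined with a mild set-valued extension of Proposition~\ref{prop:5}. For any set $\Om$ of words and any word $w$, I shall interpret $\Om\oplus w$ as $\{x\oplus w:x\in\Om\}$. With this convention, the corollary amounts to the equality
\[
\big\vert W_\la^{(a_1,a_2,\dots)}(\{231,121\}\oplus\be)\big\vert
=\big\vert W_\la^{(a_1,a_2,\dots)}(\{312,211\}\oplus\be)\big\vert
\]
for every shape $\la$ and every composition $(a_1,a_2,\dots)$. Theorem~\ref{lem:12} already supplies this identity when $\be$ is the empty word, so the remaining task is to propagate shape-Wilf-equivalence through direct summation on the right.

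To this end, the key step is to establish a set-valued version of Proposition~\ref{prop:5}: if $\Om$ and $\Si$ are shape-Wilf-equivalent sets of words with letters from $\{1,\dots,k\}$, then $\Om\oplus w$ and $\Si\oplus w$ are shape-Wilf-equivalent for every word~$w$. The approach is to revisit Jel\'\i nek and Mansour's original argument for Proposition~\ref{prop:5} and to observe that their construction depends only on the ``top'' word $w$, not on the ``bottom'' word $x$ in a single pattern $x\oplus w$. Given a filling $F$ of $\la$ avoiding every pattern in $\Om\oplus w$, one isolates a distinguished sub-configuration of $F$ that could serve as the $w$-part of an occurrence of some $x\oplus w$, using only the relative order of the letters of~$w$ together with the fact that all letters of $w$ are strictly larger than any letter of any $x\in\Om$. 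Removing the rows and columns carrying this sub-configuration leaves a sub-Ferrers shape $\mu\subseteq\la$, and the condition that $F$ avoid every element of $\Om\oplus w$ is then equivalent to the restriction of $F$ to $\mu$ avoiding every pattern in $\Om$ simultaneously. Shape-Wilf-equivalence of $\Om$ and $\Si$ supplies a bijective replacement on $\mu$, and reassembling yields a filling of $\la$ avoiding every element of $\Si\oplus w$.

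The main obstacle, and essentially the only nontrivial point, is verifying that this distinguished $w$-part can be chosen canonically in a manner that is insensitive to which element of $\Om$ one has in mind; this is what makes avoidance of the \emph{set} $\Om\oplus w$ factor through avoidance of the \emph{set} $\Om$ on the sub-shape $\mu$. Once this is in place, applying the set-valued Proposition~\ref{prop:5} to the shape-Wilf-equivalent pair $\Om=\{231,121\}$, $\Si=\{312,211\}$ with $w=\be$ immediately yields Corollary~\ref{cor:12}.
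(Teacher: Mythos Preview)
Your proposal is correct and follows exactly the route the paper takes: the corollary is stated as an immediate consequence of Theorem~\ref{lem:12} together with the observation that Proposition~\ref{prop:5} extends to sets of patterns. The paper merely asserts this extension in one clause (``can easily be extended to {\it sets} of patterns'') without further detail, whereas you spell out why the Jel\'\i nek--Mansour construction is insensitive to which bottom pattern is considered; your added justification is sound and only makes the paper's implicit reasoning explicit.
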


\begin{Remark}
It is interesting to note that $\{312,212\}$-avoiding words
appear in a ``Catalan" context in \cite{CeDLAA}. 
Let $\mathbf s=(s_1,s_2,\dots,s_a)$ be an $a$-tuple of positive integers.
In the above paper, Gessel and
Stanley's \cite{GeStAA} Stirling permutations are generalised
there to Stirling $\mathbf s$-(multi)permutations, which by definition 
are --- in our language here --- $212$-avoiding words 
consisting of $s_1$ letters~1, $s_2$ letters~2, \dots, and
$s_a$ letters~$a$. It is shown (see \cite[Theorem~2.1]{CeDLAA})
that $312$-avoiding Stirling $\mathbf s$-permutations are in bijection
with numerous other combinatorial objects subject to certain
restrictions, including planted plane trees, lattice paths,
non-crossing partitions, and dissections of polygons.
\end{Remark}

\section*{Acknowledgments}
The authors thank Doron Zeilberger for raising the problems 
addressed in this paper, and Sergey Kitaev for very helpful
correspondence.

\end{document}